\DeclareMathOperator{\dive}{div}
\DeclareMathOperator{\dom}{dom}
\def\ds{\displaystyle}
\def\eps{{\varepsilon}}
\def\R{\mathbb{R}}
\def\O{\Omega}
\def\A{\mathcal{A}}
\def\E{\mathcal{E}}
\def\F{\mathcal{F}}
\def\M{\mathcal{M}}
\def\RR{\mathcal{R}}
\def\V{\mathcal{V}}
\newcommand{\be}{\begin{equation}}
\newcommand{\ee}{\end{equation}}
\newcommand{\bib}[4]{\bibitem{#1}{\sc#2: }{\it#3. }{#4.}}
\numberwithin{equation}{section}
\theoremstyle{plain}
\newtheorem{theo}{Theorem}[section]
\theoremstyle{remark}
\newtheorem{rema}[theo]{\bf Remark}
\newtheorem{exam}[theo]{Example}
\title[Optimization problems for elliptic PDEs]{Optimization problems for elliptic PDEs}
\author{\centerline{G. Buttazzo,\hskip 0.3cm J. Casado-D\'{\i}az,\hskip 0.3cm F. Maestre}}
\date{\today}
\begin{document}

\maketitle

\hfill{\it Dedicated to Gioconda Moscariello for her 70th birthday}

\begin{abstract}
In this paper we consider some optimal control problems governed by elliptic partial differential equations. The solution is the state variable, while the control variable is, depending on the case, the coefficient of the PDE, the potential, the right-hand side. The cost functional is of integral type and involves both the state and control variables.
\end{abstract}

\bigskip

\noindent\textbf{Keywords: }shape optimization, optimal coefficients, optimal potentials, regularity, optimal control problems, bang-bang property.

\noindent\textbf{2020 Mathematics Subject Classification: }49Q10, 49J45, 35B65, 35R05, 49K20.

\section{Introduction}\label{sintro}

In this paper we consider optimal control problems governed by elliptic partial differential equations; the state variable is the corresponding solution, while for the control variable we analyze the three different situations below. In all the cases the cost functional is of integral type and may depend both on the state and the control variables. The boundary condition is taken of the Dirichlet type; making suitable arrangements in the framework, other types of boundary conditions could be considered.

We summarize here in a unified framework the results we obtained in a series of papers (see \cite{BCM24}, \cite{BCM25}, \cite{BCM26}), referring to them for all the proofs and details.
\begin{itemize}
\item The first case we consider is when the control is in the coefficient of the PDE; more precisely, the state equation is of the form
\be\label{pde1}
\begin{cases}
-\dive\big(a(x)\nabla u\big)=f&\text{in }\O\\
u=0&\text{on }\partial\O.
\end{cases}\ee
Here $\O$ is a given bounded open subset of $\R^d$ and the right-hand side $f$ is prescribed; the control variable is the coefficient $a$. The set of admissible controls is taken of the form
\be\label{contr1}
\A=\bigg\{\int_\O\psi(a)\,dx\le1\bigg\}
\ee
and the cost functional is
$$J(u,a)=\int_\O j(u,a)\,dx.$$
The optimal control problem is then
\be\label{pb1}
\min\big\{J(u,a)\ :\ (u,a)\text{ satisfy }\eqref{pde1},\ a\in\A\big\}.
\ee
In Section \ref{scoeff} we summarize the known result in this case, mainly obtained in \cite{BCM25}, where the reader may also find the related references.

\item In the second case, that we illustrate in Section \ref{spoten}, the control variable is a potential $V(x)$ and the PDE is of the form
\be\label{pde2}
\begin{cases}
-\Delta u+V(x)u=f&\text{in }\O\\
u=0&\text{on }\partial\O.
\end{cases}\ee
In this case the set of admissible controls is of the form
$$\V=\bigg\{\int_\O\psi(V)\,dx\le1\bigg\}$$
and the cost functional is
$$J(u,V)=\int_\O j(u,V)\,dx.$$
The optimal control problem, in this second case, is then
$$\min\big\{J(u,V)\ :\ (u,V)\text{ satisfy }\eqref{pde2},\ V\in\V\big\}.$$

\item The last situation, that we consider in Section \ref{srhs}, is when the control variable is the right-hand side $f$. The PDE is of the form
\be\label{pde3}
\begin{cases}
-\Delta u=f&\text{in }\O\\
u=0&\text{on }\partial\O
\end{cases}\ee
and the set of admissible controls is
$$\F=\bigg\{\int_\O\psi(f)\,dx\le1\bigg\}.$$
Therefore, in this case the optimal control problem we deal with is
$$\min\big\{J(u,f)\ :\ (u,f)\text{ satisfy }\eqref{pde3},\ f\in\F\big\}.$$
\end{itemize}

\section{Optimal coefficients}\label{scoeff}

In this section we consider the optimization problem \eqref{pb1}.

\subsection{The case of optimal compliance}\label{compliance1}

We start by the particularly interesting problem of {\it minimal compliance}, where the cost functional to be minimized is
$$C(a)=\int_\O f(x)u_a\,dx,$$
being $u_a$ the solution of the PDE \eqref{pde1}. The minimal compliance problem is crucial in the construction of optimized mechanical structures, where one wants to utilize a predetermined quantity of material to achieve maximum rigidity under a specified force field.

A simpler way to impose the constraint on $a$ is to write the problem in the form
\be\label{lagr1}
\min\bigg\{C(a)+\lambda\int_\O\psi(a)\,dx\ :\ a\ge0\bigg\},
\ee
where $\lambda>0$ plays the role of a Lagrange multiplier and $\psi$ is a convex lower semicontinuous function. Replacing $\lambda\psi$ by $\psi$ we can also assume $\lambda=1$.

Assuming $a$ in $L^\infty(\Omega)$ and uniformly elliptic to assure the existence of solution for (\ref{pde1}), multiplying both sides of \eqref{pde1} by $u$ and integrating by parts we obtain that
$$C(a)=-2E(a),$$
where $E(a)$ is the energy
$$E(a)=\inf\bigg\{\int_\O\Big[\frac12 a(x)|\nabla u|^2-f(x)u\Big]\,dx\ :\ u\in H^1_0(\O)\bigg\}.$$
Therefore the optimal control problem \eqref{lagr1} is equivalent to the maximization problem
$$\max\bigg\{2E(a)-\int_\O\psi(a)\,dx\ :\ a\ge0\bigg\},$$
and then it can be written in the max/min form
$$\max_{a\ge0}\inf_{u\in H^1_0(\O)}\int_\O\Big[a(x)|\nabla u|^2-2f(x)u-\psi(a)\Big]\,dx.$$
In some cases we have to deal with right-hand sides $f$ which are singular (for instance concentrated forces, \dots); it is then convenient to assume that $f$ is a signed measure, replacing the Sobolev space $H^1_0(\O)$ with $C^\infty_c(\O)$. It must be noticed that we may have $C(a)=+\infty$ for some coefficients $a$; this happens for instance in the case when the right-hand side $f$ concentrates on sets of dimension smaller than $d-1$ and the coefficient $a$ is a positive constant. However, these ``singular'' cases are ruled out from our discussion because we look for the minimization of the compliance $C(a)$, hence the coefficients $a$ for which $C(a)=+\infty$ are not relevant for the optimization problem. The energy $E(a)$ then takes the form
$$E(a)=\inf\bigg\{\int_\O\Big[\frac12 a(x)|\nabla u|^2\Big]\,dx-\int_\O u\,df\ :\ u\in C^\infty_c(\O)\bigg\}$$
and the max/min problem above becomes
\be\label{maxmin1}
\max_{a\ge0}\inf_{u\in C^\infty_c(\O)}\left\{\int_\O\Big[a(x)|\nabla u|^2-\psi(a)\Big]\,dx-2\int_\O u\,df\right\}.
\ee
In this formulation, since the competing functions $u$ are smooth, also the coefficient $a$ can be a (nonnegative) measure; in particular, this is important when the penalization function $\psi$ has a linear growth. In this way thin structures supported on lower dimensional sets are admissible. For a nonnegative measure $\mu$ the energy functional takes the form
$$E(\mu)=\inf\bigg\{\int_\O\frac12|\nabla u|^2\,d\mu-\int_\O u\,df\ :\ u\in C^\infty_c(\O)\bigg\}$$
and the max/min problem becomes
$$\max_{\mu\ge0}\inf_{u\in C^\infty_c(\O)}\int_\O|\nabla u|^2d\mu-\int\psi(\mu)-2\int_\O u\,df.$$
The term $\int\psi(\mu)$ has to be intended in the sense of integral functionals on measures, as
\be\label{fmu}
\int_\O\psi\big(\mu^a(x)\big)\,dx+\psi^\infty(1)\,\mu^s(\overline\O)
\ee
where $\mu=\mu^a\,dx+\mu^s$ is the decomposition of $\mu$ into absolute continuous part and singular part with respect to the Lebesgue measure and $\psi^\infty$ is the {\it recession function} associated to $\psi$.

We start to consider first the case when the function $\psi$ in \eqref{contr1} is superlinear, that is
\be\label{superl1}
\lim_{s\to+\infty}\frac{\psi(s)}{s}=+\infty;
\ee
in this case it is well-known that the coefficients $a$ have to be functions in $L^1(\O)$.

\begin{theo}\label{exL1}
Under the superlinearity assumption \eqref{superl1}, the max/min problem \eqref{maxmin1} admits a solution $a_{opt}\in L^1(\O)$, provided the right-hand side $f$ is such that $E(a)>-\infty$ for at least a coefficient $a\in L^1(\O)$.
\end{theo}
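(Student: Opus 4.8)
The plan is to use the direct method of the calculus of variations, exploiting the max/min structure of \eqref{maxmin1}. Write the functional as
$$
G(a) = \inf_{u\in C^\infty_c(\O)}\left\{\int_\O\big[a|\nabla u|^2-\psi(a)\big]\,dx - 2\int_\O u\,df\right\},
$$
so that we seek to maximize $G$ over nonnegative $a$. First I would record that the problem is not trivial: by the standing assumption there is some $a_0\in L^1(\O)$ with $E(a_0)>-\infty$, and since $\psi$ is convex lower semicontinuous with at least linear growth (we may assume $\psi\ge c_0>-\infty$, or after subtracting an affine function $\psi\ge 0$), one has $\int_\O\psi(a_0)\,dx<+\infty$ along any reasonable admissible competitor, giving $\sup_a G(a)=:M>-\infty$. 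The upper bound $M<+\infty$ follows by testing with a fixed nonzero $u$, which forces a genuine quadratic-minus-superlinear trade-off in $a$ pointwise.

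Next I would take a maximizing sequence $(a_n)\subset L^1(\O)$, $a_n\ge0$, with $G(a_n)\to M$. The key compactness step is to show $(a_n)$ is equi-integrable, hence weakly relatively compact in $L^1(\O)$ by the Dunford–Pettis theorem. This is where the superlinearity hypothesis \eqref{superl1} does the real work: from $G(a_n)\ge M-1$ and the fact that $\inf_u\{\dots\}$ is bounded below along the maximizing sequence (using that $G(a_n)$ is close to $M$ and testing the inner infimum with $u=0$ gives $G(a_n)\le -\int_\O\psi(a_n)\,dx$ when $\psi\ge0$ — wait, more carefully: $G(a_n)\le \int_\O(a_n|\nabla u|^2-\psi(a_n))\,dx - 2\int u\,df$ for every $u$, and choosing $u=0$ yields $G(a_n)\le -\int_\O\psi(a_n)\,dx$), we obtain a uniform bound $\int_\O\psi(a_n)\,dx\le -M+1$. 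Superlinearity of $\psi$ then upgrades this bound on $\int\psi(a_n)$ to equi-integrability of $(a_n)$ via the de la Vallée–Poussin criterion. So up to a subsequence $a_n\rightharpoonup a_{opt}$ weakly in $L^1(\O)$, and $a_{opt}\ge0$ a.e.

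It remains to pass to the limit, i.e.\ to show $G(a_{opt})\ge M$. For the penalization term, $a\mapsto\int_\O\psi(a)\,dx$ is convex and strongly lower semicontinuous on $L^1$, hence weakly lower semicontinuous, so $\int_\O\psi(a_{opt})\,dx\le\liminf_n\int_\O\psi(a_n)\,dx$. For the energy part one must be careful, since $G$ is an infimum over $u$ of a functional that is \emph{linear} in $a$; linear functionals are both weakly upper and lower semicontinuous, and an infimum of weakly continuous functionals is weakly upper semicontinuous — the \emph{wrong} direction for a maximization. The correct route is to fix $\eps>0$ and choose, for each $n$, a near-optimal $u_n\in C^\infty_c(\O)$ for the inner infimum at $a_n$; however $u_n$ need not be bounded. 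The cleaner argument: for \emph{any} fixed $u\in C^\infty_c(\O)$,
$$
\int_\O\big(a_n|\nabla u|^2-\psi(a_n)\big)\,dx - 2\int_\O u\,df \ge G(a_n),
$$
and letting $n\to\infty$ — the first term converges by weak $L^1$ convergence against the bounded function $|\nabla u|^2\in L^\infty$, the second term is constant in $n$, and $\int_\O\psi(a_n)\,dx\ge\int_\O\psi(a_{opt})\,dx + o(1)$ by lower semicontinuity is the wrong sign, so instead I use that along the maximizing sequence $\int_\O\psi(a_n)\,dx\to\ell$ for some $\ell\ge\int_\O\psi(a_{opt})\,dx$ (after a further subsequence), giving
$$
\int_\O\big(a_{opt}|\nabla u|^2\big)\,dx - \ell - 2\int_\O u\,df \ge \limsup_n G(a_n)=M.
$$
Since $\ell\ge\int_\O\psi(a_{opt})\,dx$ this yields $\int_\O(a_{opt}|\nabla u|^2-\psi(a_{opt}))\,dx-2\int_\O u\,df\ge M$ for every $u\in C^\infty_c(\O)$, whence $G(a_{opt})\ge M$. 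Combined with $G(a_{opt})\le M$ this gives $G(a_{opt})=M$, so $a_{opt}$ is a maximizer.

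The main obstacle is exactly this last semicontinuity issue: because we are maximizing an inf-functional that depends linearly on $a$, the naive weak-$L^1$ limit passage goes the wrong way, and one must exploit that the competitor $u$ in the inner problem can be frozen independently of $n$, trading the inf over $u$ against the weak limit in $a$. The superlinearity \eqref{superl1} is used once and essentially — to produce the equi-integrability needed for Dunford–Pettis — while the convexity and lower semicontinuity of $\psi$ handle the penalization term and, together with the affine-perturbation normalization, keep everything finite.
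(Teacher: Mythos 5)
Your proof is correct and follows essentially the same route as the paper's: the uniform bound on $\int_\Omega\psi(a_n)\,dx$ together with the superlinearity \eqref{superl1} gives weak $L^1(\Omega)$ compactness via De La Vall\'ee Poussin/Dunford--Pettis, and your fixed-$u$ limit passage is exactly the proof that the functional $G$, being an infimum over $u$ of maps that are weakly continuous in the linear term and weakly upper semicontinuous in $-\int_\Omega\psi(a)\,dx$, is weakly upper semicontinuous along the maximizing sequence, which is precisely the concavity/upper-semicontinuity argument the paper invokes. Two small remarks: your parenthetical claim that upper semicontinuity is ``the wrong direction for a maximization'' is mistaken (it is exactly the right property, and your displayed inequalities establish it), and the finiteness $M>-\infty$ really requires a competitor with both $E(a)>-\infty$ and $\int_\Omega\psi(a)\,dx<+\infty$, a point you gloss over but which is harmless since otherwise the supremum is $-\infty$ and any $a$ trivially attains it.
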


\begin{proof}
We refer to \cite{BCM25} for the proof. It is interesting to notice that the functional
$$a\mapsto\inf_{u\in C^\infty_c(\O)}\int_\O\Big[a(x)|\nabla u|^2-\psi(a)\Big]\,dx-2\int_\O u\,df$$
is concave and upper semicontinuous for the weak $L^1(\O)$ convergence, being the infimum of concave and upper semicontinuous functionals. The compactness property comes from the well-known De La Vall\'ee Poussin theorem.
\end{proof}

The case when $\psi$ has only a linear growth:
\be\label{ling}
\lim_{s\to+\infty}\frac{\psi(s)}{s}=k>0
\ee
is more delicate. In fact, in this case the definition \eqref{fmu} of $\int_\O\psi(\mu)$ is needed. We refer to \cite{BGL} for more details about this case, which has strong links with the theory of optimal transportation, as first shown in \cite {BBS} and \cite{BB}. However, by an argument similar to the one of Theorem \ref{exL1}, an optimal coefficient $a_{opt}$ still exists, but in the larger class $\M(\O)$ of nonnegative measures on $\O$, as stated below (we refer to \cite{BCM25} for the proof, similar to the one of Theorem \ref{exL1} above).

\begin{theo}\label{exM}
Under the linear growth assumption \eqref{ling}, the functional $E(a)$ admits a maximizer $a_{opt}$ in the class $\M(\O)$, provided the right-hand side $f$ is such that $E(a)>-\infty$ for at least a coefficient $a\in\M(\O)$.
\end{theo}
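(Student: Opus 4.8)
The plan is to adapt the direct-method argument behind Theorem \ref{exL1} to the setting of nonnegative measures, the only genuine difference being that the compactness and semicontinuity must now be read with respect to weak-$*$ convergence in $\M(\O)$ rather than weak $L^1$ convergence. First I would rewrite the functional to be maximized as
\be\label{Fdef}
\F(a)=\inf_{u\in C^\infty_c(\O)}\left\{\int_\O|\nabla u|^2\,da-2\int_\O u\,df-\int_\O\psi(a)\right\},
\ee
where $\int_\O\psi(a)$ is the integral functional on measures given by \eqref{fmu} with $\psi^\infty(1)=k$ by \eqref{ling}. For each fixed $u\in C^\infty_c(\O)$ the map $a\mapsto\int_\O|\nabla u|^2\,da$ is linear and weak-$*$ continuous on $\M(\O)$ (since $|\nabla u|^2\in C_c(\O)$), while the Goffman--Serrin / Reshetnyak theory tells us that $a\mapsto\int_\O\psi(a)$ is weak-$*$ lower semicontinuous on $\M(\O)$ precisely because $\psi$ is convex, lower semicontinuous, and its recession function is used for the singular part in \eqref{fmu}. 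Hence each bracketed functional is weak-$*$ upper semicontinuous in $a$, and $\F$, being an infimum of such functionals, is weak-$*$ upper semicontinuous and concave.

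Next I would extract a maximizing sequence $(a_n)\subset\M(\O)$ with $\F(a_n)\to\sup_{\M(\O)}\F=:m$, which is finite: it is $>-\infty$ by the hypothesis that $E(a)>-\infty$ for some $a\in\M(\O)$ (testing $\F$ at that $a$ against a single admissible $u$), and $<+\infty$ because choosing $u=0$ gives $\F(a)\le-\int_\O\psi(a)\le-\psi(0)|\O|<\infty$ (up to the obvious normalisation $\psi(0)<\infty$ coming from $0\in\A$). The key point is to get a uniform bound on the total masses $a_n(\overline\O)$. Because $\psi$ has linear growth $k>0$, for every $s\ge0$ we have $\psi(s)\ge ks-c_0$ for a constant $c_0$, and the same inequality transfers to the measure functional: $\int_\O\psi(a_n)\ge k\,a_n(\overline\O)-c_0|\O|$. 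Inserting a fixed test function $u_0\in C^\infty_c(\O)$ into \eqref{Fdef} yields
\be\label{massbound}
\F(a_n)\le\left(\sup_\O|\nabla u_0|^2\right)a_n(\overline\O)-2\int_\O u_0\,df-k\,a_n(\overline\O)+c_0|\O|,
\ee
so if $u_0$ is chosen with $\sup_\O|\nabla u_0|^2<k$ (always possible by scaling a fixed bump function) the coefficient of $a_n(\overline\O)$ is negative and $\F(a_n)\to m>-\infty$ forces $\sup_n a_n(\overline\O)<+\infty$. By weak-$*$ compactness of bounded subsets of $\M(\overline\O)$ we pass to a subsequence $a_n\weak a_{opt}$ in $\M(\overline\O)$; a standard argument (choosing $u_0$ supported away from $\partial\O$, or a Lebesgue-point/no-mass-on-boundary reduction) lets us regard $a_{opt}\in\M(\O)$. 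Finally, weak-$*$ upper semicontinuity of $\F$ gives $\F(a_{opt})\ge\limsup_n\F(a_n)=m$, so $a_{opt}$ is the desired maximizer, and correspondingly $E(a_{opt})\ge E(a)$ for every $a\in\M(\O)$ after unwinding the relation between $E$ and the max/min formulation.

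The main obstacle is the compactness step \eqref{massbound}: under the merely linear growth \eqref{ling} one cannot invoke De La Vall\'ee Poussin as in Theorem \ref{exL1}, and the whole point is that mass can a priori escape to infinity in the value. This is resolved exactly by the negativity of the coefficient of $a_n(\overline\O)$ in \eqref{massbound}, which is why the test function must be chosen with gradient strictly controlled by $k$; this is also the reason the natural ambient space is $\M(\overline\O)$ and one must take care that no mass is lost on $\partial\O$ when returning to $\M(\O)$. A secondary technical point is the correct weak-$*$ lower semicontinuity of the measure functional $a\mapsto\int_\O\psi(a)$ in \eqref{fmu}, including the boundary term $\psi^\infty(1)\mu^s(\overline\O)$; this is classical (Goffman--Serrin, Reshetnyak lower semicontinuity) but must be cited with the recession function $\psi^\infty$ appearing for the singular part, as already emphasised in the text around \eqref{fmu}. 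For the details we refer to \cite{BCM25}.
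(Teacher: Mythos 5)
Correct, and essentially the route the paper itself indicates: Theorem \ref{exM} is proved by the same direct-method scheme as Theorem \ref{exL1}, namely writing the value as an infimum over $u\in C^\infty_c(\O)$ of functionals that are affine and weak-$*$ upper semicontinuous in the measure (using the Reshetnyak/Goffman--Serrin lower semicontinuity of $\int_\O\psi(\mu)$ with the recession term), with a uniform total-mass bound on a maximizing sequence replacing the De La Vall\'ee Poussin compactness. One minor repair: with the exact slope $k$ the minorant $\psi(s)\ge ks-c_0$ can fail (e.g.\ $\psi(s)=ks-\sqrt{s+1}$ on $[0,\infty)$), but $\psi(s)\ge(k-\varepsilon)s-c_\varepsilon$ holds for every $\varepsilon>0$, and since you take $\sup_\O|\nabla u_0|^2<k$ strictly (indeed $u_0=0$ already gives the mass bound), your compactness step goes through unchanged.
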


Our goal is now to characterize the optimal coefficient $a_{opt}$ in terms of some suitable auxiliary variational problem. Due to the convexity with respect to the variable $u$ and the concavity with respect to the variable $a$, a well-known result from min/max theory (see for instance \cite{Cl} and \cite{Ek}) allows us to exchange the order of inf and sup, and the initial problem becomes

$$\inf_{u\in C^\infty_c(\O)}\sup_{a\ge0}\int_\O\Big(a(x)|\nabla u|^2-\psi(a)\Big)\,dx-2\int u\,df.$$
The supremum with respect to $a$ can be now easily computed:
$$\sup_{a\ge0}\int_\O\Big(a(x)|\nabla u|^2-\psi(a)\Big)\,dx-2\int u\,df=\int_\O\psi^*\big(|\nabla u|^2\big)\,dx-2\int u\,df,$$
where $\psi^*$ is the Legendre-Fenchel conjugate function of $\psi$. The auxiliary variational problem is then
$$\inf_{u\in C^\infty_c(\O)}\int_\O\psi^*\big(|\nabla u|^2\big)\,dx-2\int u\,df.$$
By the definition of Legendre-Fenchel conjugate we obtain
$$\psi^*(t)\ge t-\psi(1)\qquad\text{for every }t$$
and then it is easy to see that, at least when $f\in H^{-1}(\O)$, the auxiliary variational problem above admits a solution $\bar u\in H^1_0(\O)$. Moreover, if $\psi$ is strictly increasing on $\R^+$, then the function $s\mapsto\psi^*(s^2)$ is strictly convex and therefore $\bar u$ is unique. The optimal coefficient $a_{opt}$ can now be recovered through the optimality condition
$$a_{opt}|\nabla\bar u|^2=\psi(a_{opt})+\psi^*(|\nabla\bar u|^2).$$

\begin{exam}\label{Ex1}
Let $\psi(s)=s^p/p$ with $p>1$ and assume $f\in W^{-1,2p/(p+1)}(\O)$. By Theorem \ref{exL1} there exists a (unique) optimal coefficient $a_{opt}\in L^1(\O)$, which is indeed in $L^p(\O)$. It can be recovered through the auxiliary variational problem
$$\inf\bigg\{\int_\O\frac{p-1}{p}|\nabla u|^{2p/(p-1)}\,dx-2\int u\,df\ :\ u\in W^{1,2p/(p-1)}_0(\O)\bigg\}.$$
or equivalently by the nonlinear PDE
$$-\Delta_{2p/(p-1)}u=\frac{2p}{p-1}f,\qquad u\in W^{1,2p/(p-1)}_0(\O),$$
whose unique solution $\bar u$ provides the optimal coefficient $a_{opt}(x)=|\nabla\bar u(x)|^{2/(p-1)}$. For instance, if $\O$ is the unit ball, and $f=1$ we obtain (see Figure \ref{figura1})
$$\bar u(x)=\frac{p+1}{2p\,d^{(p-1)/(p+1)}}\big(1-|x|^{2p/(p+1)}\big),\qquad a_{opt}(x)=\frac{|x|^{2/(p+1)}}{d^{2/(p+1)}}.$$

\begin{figure}[h!]
\centering
{\includegraphics[scale=0.5]{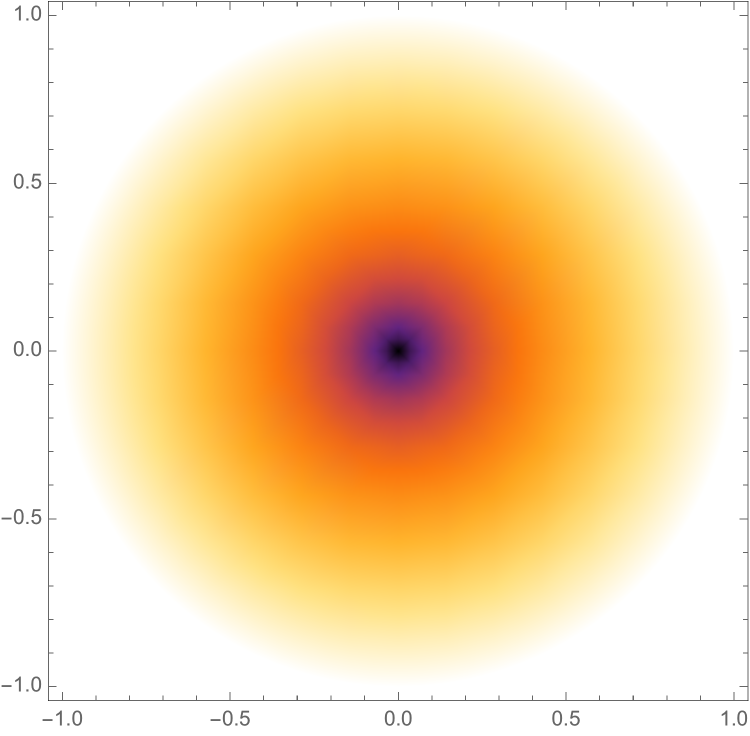}}
\caption{Optimal coefficient when $d=2$, $\O$ is the unit disk, $p=2$, $f=1$.}\label{figura1}
\end{figure}

When $p<d/(d-2)$ measures belong to $W^{-1,2p/(p+1)}(\O)$; taking $f=\delta_0$ the unit Dirac mass at the origin and $\O$ the unit ball, we obtain
$$\bar u(x)=A_p(1-|x|^{(2p-dp+d)/(p+1)}),\qquad a_{opt}(x)=B_p|x|^{-2(d-1)/(p+1)}$$
with $A_p,B_p$ suitable positive constants that can be computed explicitly. For instance, in the two-dimensional case $d=2$, we find (see Figure \ref{figura2})
$$A_p=\frac{p+1}{2}(2\pi)^{(1-p)/(1+p)},\qquad B_p=(2\pi)^{-2/(1+p)}.$$

\begin{figure}[h!]
\centering
{\includegraphics[scale=0.5]{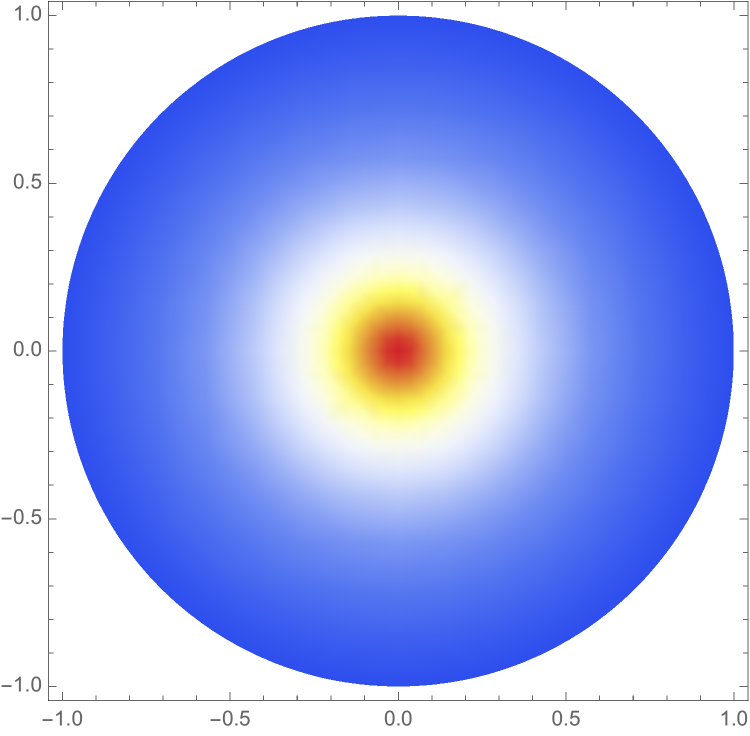}}
\caption{Optimal coefficient when $d=2$, $\O$ is the unit disk, $p=2$, $f=\delta_0$.}\label{figura2}
\end{figure}
\end{exam}

\begin{exam}\label{Ex2}
Taking
$$\psi(s)=\begin{cases}
s&\hbox{if }\alpha\le s\le\beta\\
+\infty&\hbox{otherwise,}
\end{cases}$$
with $0<\alpha<\beta$, we have the auxiliary variational problem
$$\min\bigg\{\int_\O\Big[\big(|\nabla u|^2-1\big)\big(\beta1_{\{|\nabla u|>1\}}+\alpha1_{\{|\nabla u|<1\}}\big)-2f(x)u\Big]\,dx\ :\ u\in H^1_0(\O)\bigg\},$$
whose unique solution $\bar u$ provides the optimal coefficient $a_{opt}\in L^\infty(\O)$. It has been proved in \cite{Cas} (see also \cite{Cas2}) that, when $\O$ is of class $C^{1,1}$ and $f\in L^2(\O)$, then $\bar u$ is in $H^2(\O)$ and $\nabla a_{opt}\cdot\nabla \bar u$ belongs to $L^2(\O)$.
\end{exam}

\subsection{The general case}\label{control1}

In the case of a general optimal control problem of the form
\be\label{pbge}
\min_{a\ge0}\min_{u\in H^1_0(\O)}\Big\{\int_\O\big(j(x,u)+\psi(a)\big)\,dx\ :\ u\hbox{ solves \eqref{pde1}}\Big\},
\ee
the existence of an optimal coefficient $a_{opt}$ may fail, and a solution exists only in a {\it relaxed} sense. A counterexample to the existence of a solution $a_{opt}$ can be found in \cite{Mur}, where
$$\psi(s)=\begin{cases}
0&\hbox{if }\alpha\le s\le\beta\\
+\infty&\hbox{otherwise,}
\end{cases}$$
with $0<\alpha<\beta$, and
$$j(x,s)=|s-u_0(x)|^2$$
for a suitable function $u_0$. A different counterexample is illustrated in \cite{BCM25}.

The reason of the lack of existence of an optimal coefficient is the form of the relaxed problem associated to \eqref{pbge}. This question has been largely studied, in terms of the notion of $G$-convergence, introduced by De Giorgi and Spagnolo in \cite{DGS}: a sequence $a_n(x)$ of functions between $\alpha$ and $\beta$ is said to $G$-converge to a symmetric $d\times d$ matrix $A(x)$ if for every $f\in L^2(\O)$ the solutions $u_n$ of the PDEs
$$-\dive\big(a_n\nabla u_n\big)=f,\qquad u_n\in H^1_0(\O)$$
converge in $L^2(\O)$ to the solution $u$ of the PDE
$$-\dive\big(A\nabla u\big)=f,\qquad u\in H^1_0(\O).$$

The relaxed form of the optimal control problem \eqref{pbge} is then closely related to the characterization of the $G$-closure $\overline\A$ of the set $\A$ of admissible coefficients $a(x)$. A complete answer has been given by Murat and Tartar in \cite{MT}, \cite{Tar} (see also \cite{All}, and \cite{LC} for the two-dimensional case). They proved that the $G$-closure $\overline\A$ above consists of all symmetric $d\times d$ matrices $A(x)$ whose eigenvalues $\lambda_1(x)\le\lambda_2(x)\le\dots\le\lambda_d(x)$ are between $\alpha$ and $\beta$ and satisfy for a suitable $t\in[0,1]$ (depending on $x$) the following $d+2$ inequalities:
\be\label{relset}
\begin{cases}
\ds\sum_{1\le i\le d}\frac{1}{\lambda_i-\alpha}\le\frac{1}{\nu_t-\alpha}+\frac{d-1}{\mu_t-\alpha}\\
\ds\sum_{1\le i\le d}\frac{1}{\beta-\lambda_i}\le\frac{1}{\beta-\nu_t}+\frac{d-1}{\beta-\mu_t}\\
\nu_t\le\lambda_i\le\mu_t\qquad i=1,\dots,d,
\end{cases}
\ee
being $\mu_t$ and $\nu_t$ respectively the arithmetic and the harmonic mean of $\alpha$ and $\beta$, namely
$$\mu_t=t\alpha+(1-t)\beta,\qquad\nu_t=\Big(\frac{t}{\alpha}+\frac{1-t}{\beta}\Big)^{-1}.$$
For instance, when $d=2$, the set above is given by the symmetric $2\times2$ matrices $A(x)$ whose eigenvalues $\lambda_1(x)$ and $\lambda_2(x)$ are between $\alpha$ and $\beta$ and satisfy the inequality
$$\frac{\alpha\beta}{\alpha+\beta-\lambda_1(x)}\le\lambda_2(x)\le\alpha+\beta-\frac{\alpha\beta}{\lambda_1(x)}\;.$$
In Figure \ref{lens} we represent the case $d=2$, $\alpha=1$, $\beta=2$.

\begin{figure}[h!]
\centering
{\includegraphics[scale=1.0]{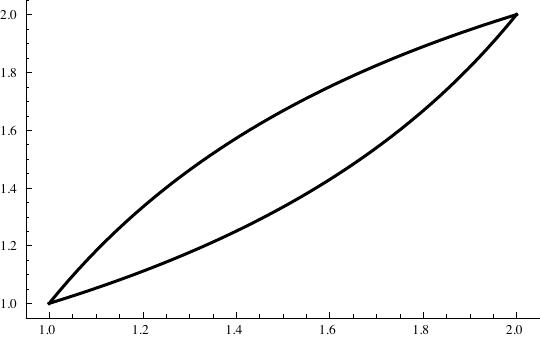}}
\caption{Attainable matrices, in the plane $(\lambda_1,\lambda_2)$, for $d=2$, $\alpha=1$, $\beta=2$.}\label{lens}
\end{figure}

\begin{rema}
For a general function $\psi(x,a)$, an explicit form of the relaxation
$$\Psi(A)=\inf_{a_n\to_G A}\liminf_n\int_\O\psi(x,a_n)\,dx$$
is not known. The case $\psi(x,a)=g(x)a$ has been considered in \cite{Cab} and \cite{CDM}; for instance, in the particular case when $g(x)$ is a constant $\gamma$ and the coefficient $a$ is constrained between two positive constants $\alpha$ and $\beta$, denoting by $\lambda_{max}(A)$ the largest eigenvalue of the $d\times d$ symmetric matrix $A$, the relaxation $\Psi(A)$ is given by
$$\Psi(A)=\begin{cases}\ds
\int_\O\gamma\lambda_{max}\big(A(x)\big)\,dx&\text{if }A\in\overline\A\\
+\infty&\text{otherwise,}
\end{cases}$$
where $\overline\A$ is the $G$-closure described above.
\end{rema}

\begin{exam}\label{Ex22}
For $\Omega=(0,1)^2$ we take
$$j(x,s)=s,\quad\quad f(x_1,x_2)=x_1^2,$$
$$\psi(s)=\begin{cases}
s&\hbox{if }5\le s\le 10\\
+\infty&\hbox{otherwise,}
\end{cases}$$
we solve numerically the relaxed formulation of the problem searching an optimal solution $A_{opt}$ whose eigenvalues satisfy (\ref{relset}).

\begin{figure}[!t]
\begin{minipage}[!h]{19cm}
\centering
\hspace{-3.5cm}\includegraphics[width=9.5cm]{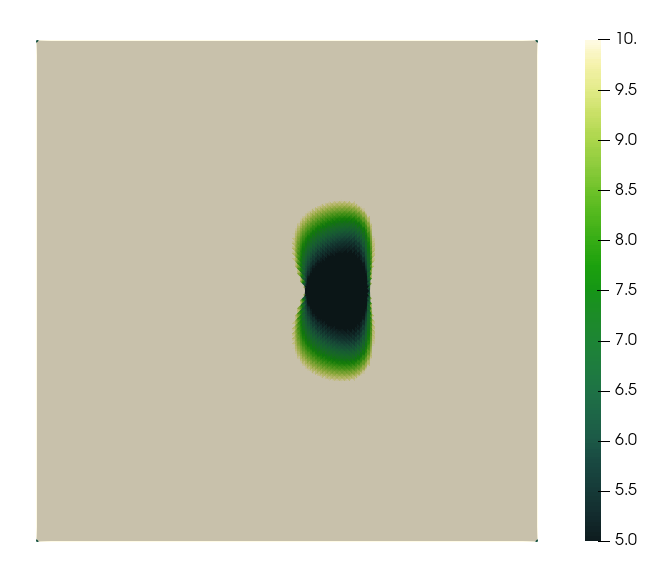}\hspace{-.2cm}
\includegraphics[width=9.5cm]{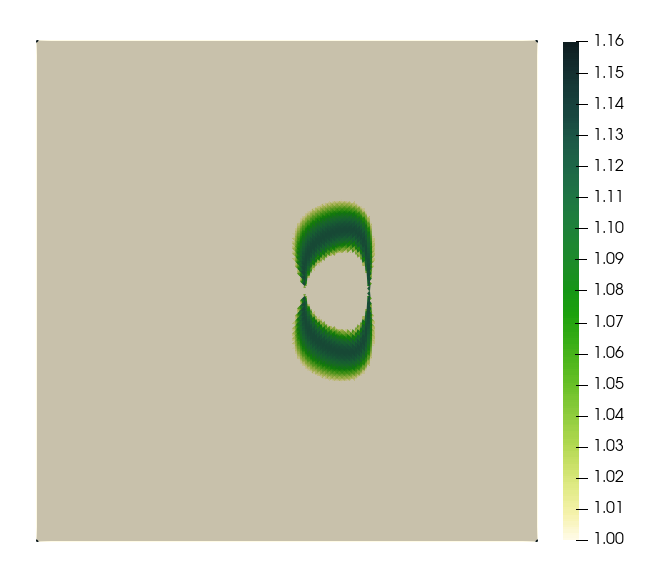}
\end{minipage}
\caption{Left: optimal first eigenvalue. Right: ratio $\lambda_2/\lambda_1$.} \label{eigenvalues}
\end{figure}

In Figure \ref{eigenvalues} we show, on the left, the computed optimal first eigenvalue of the optimal matrix $A_{opt}$, and on the right, the ratio $\lambda_2/\lambda_1$. It is interesting to observe the existence of zones with positive measure where $\lambda_1\not=\lambda_2$ and then that $A_{opt}$ is a relaxed solution.
\end{exam}

\section{Optimal potentials}\label{spoten}

In this section, we investigate a class of optimization problems characterized by the minimization of functionals of the form
\be\label{poten}
\min\int_\O\big[j(x,u)+\psi(V)\big]\,dx,
\ee
governed by the state equation
\be\label{pstate}
\begin{cases}
-\Delta u+Vu=f\quad\text{in }\O\\
u\in H^1_0(\O),
\end{cases}
\ee
where, as in the previous section, $\O$ is a bounded open subset of $\R^d$, and the right-hand side $f\in L^2(\O)$. The integrand $j(x,u)$ represents the contribution of the state variable $u$, while $\psi(V)$ models the cost associated with the control variable $V$, which in this context is the potential function and is assumed to be nonnegative almost everywhere in $\O$.

We further assume that $\psi$ has a superlinear growth at infinity. This growth condition ensures, by virtue of the De La Vall\'ee Poussin theorem, that any admissible potential $V$ of finite cost must belong to the space $L^1(\O)$.

Again, the compliance case, corresponding to the choice $j(x,u)=f(x)u$, is particularly simple since, integrating by parts equation \eqref{pstate}, we may reduce the problem to the variational form as
\be\label{poptpb}
\min\Big\{-2\E(V)+\Psi(V)\ :\ V\in L^1(\O),\ V\ge0\Big\},
\ee
where the functionals $\E$ and $\Psi$ are defined by
\[\begin{split}
&\Psi(V)=\int_\O\psi(V)\,dx\\
&\E(V)=\min\left\{\int_\O\Big[\frac12|\nabla u|^2+\frac12 Vu^2-f(x)u\Big]\,dx\ :\ u\in H^1_0(\O)\right\}.
\end{split}\]

In this specific case, one can exploit the variational structure of the problem to eliminate the control variable $V$ entirely, reducing the original optimal control problem to a simpler auxiliary variational problem. More precisely, it is possible to reformulate the minimization \eqref{poptpb} in terms of the state variable $u$ only, as
\be\label{aux}
\min\left\{\int_\O\left[|\nabla u|^2+\psi^*(u^2)-2f(x)u\right]\,dx\ :\ u\in H^1_0(\O)\right\},
\ee
where $\psi^*$ denotes the Legendre-Fenchel transform of the function $\psi$. We observe that the unique minimizer $\bar u$ of the variational problem \eqref{aux} can be characterized as the solution of the semilinear elliptic boundary value problem
$$\begin{cases}
-\Delta u+g(u)=f&\text{in }\O,\\
u\in H^1_0(\O),
\end{cases}$$
where the function $g$ is defined as
$$g(s)=s(\psi^*)'(s^2).$$
The optimal control $V_{opt}$ can now be recovered in terms of the optimal state $\bar u$ via the formula
$$V_{opt}=(\psi^*)'(\bar u^2).$$
It is important to note, however, that this reduction technique, based on the elimination of the control variable, is specific to the case where the integrand $j(x,u)$ in the cost functional is the one related to the compliance case. In the general setting, where $j(x,u)$ is an arbitrary function of the state, such elimination is no longer feasible. Instead, the analysis must rely on the optimality system, which typically involves introducing an adjoint state variable and solving the corresponding adjoint PDE, coupled with the original state equation.

Despite these complexities, it can be shown that the original optimal control problem \eqref{poten} with state equation \eqref{pstate}, admits at least one solution pair $(\bar u,V_{opt})$. This result has been obtained in \cite{BCM24}, and we report the existence theorem below.

\begin{theo}\label{expot}
Let $\O\subset\R^d$ be a bounded open set, $j:\O\times\R\to\R$ an integrand measurable in the first component and lower semicontinuous in the second one, satisfying the growth condition
$$a(x)-c|s|^2\le j(x,s)$$
for suitable $c\ge0$ and $a\in L^1(\O)$, and $\psi:\R\to[0,\infty]$ a convex lower semicontinuous function with $\dom(\psi)\subset[0,\infty)$, such that the superlinearity condition 
$$\lim_{|s|\to\infty}\frac{\psi(s)}{s}=+\infty$$
holds. Then, for every $f\in H^{-1}(\O)$, problem \eqref{poten} has a least one solution $V_{opt}\in L^1(\O)$.
\end{theo}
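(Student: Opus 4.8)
The plan is to use the direct method of the calculus of variations, working in the pair of variables $(u,V)$. Let $m$ denote the infimum in \eqref{poten} over admissible pairs $(u,V)$ with $V\ge0$, $\Psi(V)<\infty$, and $u$ solving \eqref{pstate}. First I would check that $m<+\infty$: taking any fixed $V_0\in L^1(\O)$, $V_0\ge0$, with $\psi(V_0)\in L^1(\O)$ (such $V_0$ exists since $\dom\psi\subset[0,\infty)$ is nonempty, e.g.\ a constant in $\dom\psi$), the corresponding state $u_0\in H^1_0(\O)$ is well-defined and the cost is finite thanks to the growth bound $j(x,s)\ge a(x)-c|s|^2$ and $u_0\in L^2(\O)$. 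Then pick a minimizing sequence $(u_n,V_n)$.

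Next I would extract compactness. The key point is that $\sup_n\Psi(V_n)<\infty$ along the minimizing sequence (up to discarding the finitely many terms where the cost is large); combined with $V_n\ge0$ and the superlinearity of $\psi$, the De~La~Vall\'ee Poussin theorem gives that $\{V_n\}$ is equi-integrable, hence weakly relatively compact in $L^1(\O)$. So, up to a subsequence, $V_n\rightharpoonup V_{opt}$ weakly in $L^1(\O)$, with $V_{opt}\ge0$ a.e. For the states, I would test \eqref{pstate} for $V_n$ with $u_n$ itself: since $V_n\ge0$, this yields $\int_\O|\nabla u_n|^2\,dx\le\int_\O|\nabla u_n|^2\,dx+\int_\O V_nu_n^2\,dx=\langle f,u_n\rangle\le\|f\|_{H^{-1}}\|u_n\|_{H^1_0}$, so $\{u_n\}$ is bounded in $H^1_0(\O)$. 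Up to a further subsequence, $u_n\rightharpoonup \bar u$ weakly in $H^1_0(\O)$ and strongly in $L^2(\O)$ (and a.e.).

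Then I would pass to the limit in the state equation. For every $\f\in C^\infty_c(\O)$ one has $\int_\O\nabla u_n\cdot\nabla\f\,dx+\int_\O V_nu_n\f\,dx=\langle f,\f\rangle$. The first term passes to the limit by weak $H^1_0$ convergence. For the second, write $V_nu_n\f=V_n(u_n\f)$: here $u_n\f\to\bar u\f$ strongly in $L^\infty$? — not quite; rather, $u_n\to\bar u$ a.e.\ and (by Sobolev embedding on the support of $\f$, or by Egorov) $u_n\f\to\bar u\f$ in measure and is dominated, while $V_n\rightharpoonup V_{opt}$ in $L^1$. The product of an $L^1$-weakly convergent sequence with an a.e.-convergent, uniformly bounded sequence converges weakly in $L^1$; more carefully, one splits $V_n(u_n-\bar u)\f$ using equi-integrability of $\{V_n\}$ and the smallness of $\{|u_n-\bar u|>\delta\}$, and handles $V_n\bar u\f$ by weak $L^1$ convergence against the fixed $L^\infty$ function $\bar u\f$. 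This shows $(\bar u,V_{opt})$ solves \eqref{pstate}, so in particular $\bar u=u_{V_{opt}}$ is admissible.

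Finally I would prove lower semicontinuity of the cost. The state term: $u\mapsto\int_\O j(x,u)\,dx$ is, under the stated hypotheses on $j$ (measurable in $x$, lsc in $s$, bounded below by $a(x)-c|s|^2$), sequentially lower semicontinuous with respect to strong $L^2(\O)$ convergence — this follows from Fatou's lemma applied to $j(x,u_n)-a(x)+c|u_n|^2\ge0$, using $u_n\to\bar u$ a.e.\ and $\int_\O|u_n|^2\to\int_\O|\bar u|^2$ (strong $L^2$ convergence). The control term: $V\mapsto\Psi(V)=\int_\O\psi(V)\,dx$ is convex and lsc in $V$, hence weakly lsc in $L^1(\O)$, so $\Psi(V_{opt})\le\liminf_n\Psi(V_n)$. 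Adding, $\int_\O[j(x,\bar u)+\psi(V_{opt})]\,dx\le\liminf_n\int_\O[j(x,u_n)+\psi(V_n)]\,dx=m$, and since $(\bar u,V_{opt})$ is admissible the reverse inequality is trivial, so $(\bar u,V_{opt})$ is optimal and $V_{opt}\in L^1(\O)$ as desired.

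The main obstacle is the passage to the limit in the zeroth-order term $\int_\O V_nu_n\f\,dx$ of the state equation: one only has weak $L^1$ convergence of $V_n$ and weak $H^1_0$ convergence of $u_n$, and the product of two merely weakly convergent sequences need not converge to the product of the limits. The resolution exploits the \emph{compact} embedding $H^1_0(\O)\hookrightarrow L^2(\O)$ (hence a.e.\ convergence of $u_n$ along a subsequence) together with the \emph{equi-integrability} of $\{V_n\}$ coming from superlinearity of $\psi$; this combination is exactly what makes the product converge weakly in $L^1_{loc}$, and is the one place where the superlinearity hypothesis on $\psi$ is genuinely used (beyond ensuring $V_{opt}\in L^1$). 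A secondary technical point, handled by the coercivity estimate above, is that no lower bound on $V$ is assumed, but $V\ge0$ suffices to bound $\{u_n\}$ in $H^1_0(\O)$ directly from the energy identity.
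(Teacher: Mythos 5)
Your overall strategy — direct method in the pair $(u,V)$, equi-integrability of $\{V_n\}$ from De La Vall\'ee Poussin, the $H^1_0$ bound obtained by testing the state equation with $u_n$, Fatou for the $j$-term and convex weak-$L^1$ lower semicontinuity for the $\psi$-term — is the natural one (the paper itself gives no proof and defers to \cite{BCM24}). The genuine gap is exactly at the step you flag as the main obstacle, the passage to the limit in $\int_\O V_nu_n\f\,dx$. Your justification treats $u_n\f$ as a ``uniformly bounded'' sequence and $\bar u\f$ as a ``fixed $L^\infty$ function''; neither is true when $d\ge2$, since $H^1_0(\O)\not\subset L^\infty(\O)$ and, with $f$ only in $H^{-1}(\O)$, no uniform $L^\infty$ bound on the states is available. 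The product lemma you invoke (weak $L^1$ times bounded a.e.-convergent) therefore does not apply, and the ``more careful'' splitting does not close the gap either: on $\{|u_n-\bar u|>\delta\}$ equi-integrability controls $\int V_n\,dx$, but the integrand is $V_n|u_n-\bar u||\f|$ with $|u_n-\bar u|$ possibly large there, and absorbing it by Cauchy--Schwarz would require a uniform bound on $\int_\O V_n\bar u^2\,dx$, which you do not have. So, as written, the admissibility of the limit pair $(\bar u,V_{opt})$ is not proved.

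The step is nevertheless correct and can be repaired with a bound you already derived but did not use here, namely $\int_\O V_nu_n^2\,dx\le\|f\|^2_{H^{-1}}$: truncate the state rather than the limit. Writing $T_k$ for the truncation at level $k$, the lemma you quoted applies legitimately to $T_k(u_n)\f$ (bounded by $k\|\f\|_\infty$ and a.e.\ convergent), giving $\int_\O V_nT_k(u_n)\f\,dx\to\int_\O V_{opt}T_k(\bar u)\f\,dx$, while the remainder obeys $\big|\int_\O V_n\big(u_n-T_k(u_n)\big)\f\,dx\big|\le\|\f\|_\infty k^{-1}\int_\O V_nu_n^2\,dx\le Ck^{-1}$ uniformly in $n$; the same device applied to $T_k(u_n)^2$ yields $\int_\O V_{opt}\bar u^2\,dx\le C$, so letting $k\to\infty$ identifies the limit as $\int_\O V_{opt}\bar u\f\,dx$. (Equivalently one can bypass the weak formulation: $u_n$ minimizes $w\mapsto\frac12\int_\O|\nabla w|^2dx+\frac12\int_\O V_nw^2dx-\langle f,w\rangle$; compare with bounded competitors, for which $\int_\O V_nw^2dx\to\int_\O V_{opt}w^2dx$, use an Ioffe-type lower semicontinuity for $\int_\O V_nu_n^2dx$, and conclude by truncation that $\bar u$ is the state of $V_{opt}$.) Two smaller points: for $V\in L^1(\O)$ the state equation must be understood in the variational sense (minimization on $H^1_0(\O)$ with $\int_\O Vu^2dx<\infty$), which is what licenses testing with $u_n$ itself and guarantees $\int_\O V_nu_n\f\,dx$ is finite; and finiteness of the infimum does not follow from the stated growth condition, which is only a lower bound on $j$ — this is harmless (if the infimum is $+\infty$ the statement is vacuous, and the lower bound together with the uniform $L^2$ bound on states does give that the infimum exceeds $-\infty$), but it should not be attributed to that bound.
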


In \cite{BCM24} the properties of the solutions have been studied too. In particular, under appropriate assumptions on the data and the integrands, the quantity $V_{opt}\bar u v$ is shown to belong to the space of functions of bounded variation, $BV(\O)$, where $v$ denotes the solution of the adjoint equation (\ref{padj}). This implies the important consequence that the optimal control $V_{opt}$ is locally $BV$ outside of the set $\{\bar u v=0\}$. Thus, the degeneracy set $\{\bar u v=0\}$ plays a central role in determining the structure of singularities in the optimal control.

To be more precise, in \cite{BCM24} the following necessary conditions of optimality have been obtained.

\begin{theo}\label{TConOp}
Assume that the right-hand side $f$ is in $W^{-1,r}(\O)$ with $r>d$, and that the integrand $j(x,\cdot)$ is of class $C^1(\R)$ and satisfies
$$j(\cdot,0)\in L^1(\O),\qquad\max_{|s|\le M}|\partial_s j(\cdot,s)|\in L^{r/2}(\O)\quad\forall\,M>0.$$
Then, if $V_{opt}$ is an optimal control for problem \eqref{poten}, $\bar u$ is the corresponding state function, solution of \eqref{pstate}, and $v$ is the adjoint state, solution of
\be\label{padj}
\begin{cases}
-\Delta v+V_{opt}v=\partial_sj(x,\bar u)&\text{in }\O\\
v=0&\text{on }\partial\O,
\end{cases}\ee
we have the optimality conditions
\be\label{pcondopt}
\begin{cases}
V_{opt}\in L^\infty(\O)\\
\bar u v\in\partial\psi(V_{opt})\\
h_-(\bar u v)\le V_{opt}\le h(\bar u v),
\end{cases}\ee
where $h:\R\to\R$ is the function
\be\label{functionh}
h(t)=\max\big\{s\in\dom(\psi)\ :\ t\in\partial\psi(s)\big\}.
\ee
\end{theo}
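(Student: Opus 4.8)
The plan is to obtain \eqref{pcondopt} as the Euler--Lagrange system of the control problem \eqref{poten}: I would perturb the optimal control $V_{opt}$ inside the admissible class, express the first variation of the state part $\int_\O j(x,u)\,dx$ through the adjoint state $v$ solving \eqref{padj}, and use the convexity of $\psi$ to turn the resulting inequality into the pointwise inclusion $\bar uv\in\partial\psi(V_{opt})$. The two remaining conditions then follow, respectively, from elliptic regularity and from the superlinearity of $\psi$.

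\med\textbf{Step 1 (boundedness of state and adjoint state).} Since $f\in W^{-1,r}(\O)$ with $r>d$ and $V_{opt}\ge0$, the zeroth order term in \eqref{pstate} carries the favourable sign, so Stampacchia's truncation method gives $\bar u\in L^\infty(\O)$ (in fact $\bar u$ is Hölder continuous). The growth hypothesis on $j$ then yields $\partial_sj(\cdot,\bar u)\in L^{r/2}(\O)\subset H^{-1}(\O)$, so the adjoint problem \eqref{padj} has a (unique) solution $v\in H^1_0(\O)$, and, since $r/2>d/2$ and again $V_{opt}v\cdot v\ge0$, the same argument applied to \eqref{padj} gives $v\in L^\infty(\O)$. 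In particular $\bar uv\in L^\infty(\O)$.

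\med\textbf{Step 2 (the variational inequality).} Fix an admissible competitor $W\ge0$, $\int_\O\psi(W)\,dx<\infty$, and use the convex path $V_t=(1-t)V_{opt}+tW$, $t\in[0,1]$; it is admissible, with $\int_\O\psi(V_t)\,dx\le(1-t)\int_\O\psi(V_{opt})\,dx+t\int_\O\psi(W)\,dx$. Writing $u_t$ for the state of $V_t$, the increment $w_t=u_t-\bar u$ solves $-\Delta w_t+V_tw_t=-t\,(W-V_{opt})\bar u$; since the right-hand side lies in $L^1(\O)$, I would estimate $w_t/t$ in $W^{1,q}_0(\O)$ for every $q<d/(d-1)$, identify its limit as the solution $\dot u$ of $-\Delta\dot u+V_{opt}\dot u=-(W-V_{opt})\bar u$, and obtain
$$\lim_{t\to0^+}\frac1t\int_\O\big(j(x,u_t)-j(x,\bar u)\big)\,dx=\int_\O\partial_sj(x,\bar u)\,\dot u\,dx=-\int_\O\bar uv\,(W-V_{opt})\,dx,$$
the last equality being the adjoint identity coming from pairing \eqref{padj} with $\dot u$. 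Since $V_{opt}$ minimizes the cost, dividing $0\le\mathrm{Cost}(V_t)-\mathrm{Cost}(V_{opt})$ by $t>0$ and letting $t\to0^+$ gives
$$\int_\O\psi(W)\,dx\ \ge\ \int_\O\psi(V_{opt})\,dx+\int_\O\bar uv\,(W-V_{opt})\,dx\qquad\text{for every admissible }W.$$

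\med\textbf{Step 3 (conclusion).} The inequality just obtained, together with $\bar uv\in L^\infty(\O)$, says that $V_{opt}$ minimizes over $\{W\ge0\}$ the convex integral functional $W\mapsto\int_\O\big(\psi(W)-\bar uv\,W\big)\,dx$; since $\dom(\psi)\subset[0,\infty)$ the sign constraint is automatic, and by the pointwise characterization of minimizers of integral functionals this is equivalent to $\bar uv(x)\in\partial\psi(V_{opt}(x))$ for a.e.\ $x\in\O$, the second condition in \eqref{pcondopt}. The monotonicity of the graph $\partial\psi$ together with the definition \eqref{functionh} of $h$ (and its minimal counterpart $h_-$) yields $h_-(\bar uv)\le V_{opt}\le h(\bar uv)$ a.e., the third condition. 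Finally $h$ is nondecreasing (by monotonicity of $\partial\psi$) and real-valued (a fixed $p$ lying in $\partial\psi(s)$ for arbitrarily large $s$ would contradict the superlinearity of $\psi$, via the subgradient inequality); since $\bar uv$ is bounded this gives $V_{opt}\le h\big(\|\bar uv\|_{L^\infty(\O)}\big)<\infty$, i.e.\ $V_{opt}\in L^\infty(\O)$, the first condition.

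\med The step I expect to be the real obstacle is the differentiation in Step 2: proving the convergence of $w_t/t$ and, above all, legitimizing the pairing $\int_\O\partial_sj(x,\bar u)\dot u\,dx=\int_\O v\,(-\Delta\dot u+V_{opt}\dot u)\,dx$ when both $v$ and $\dot u$ have gradients only in $L^q$ with $q<d/(d-1)$ and $V_{opt}$ is merely $L^1(\O)$, so that none of the natural pairings is licit in the usual $H^1_0$--$H^{-1}$ duality. I would handle this by approximation: regularize $f$ and $j$ and truncate $V_{opt}$, derive the identity for the regularized problems where all quantities are sufficiently integrable, and pass to the limit using the uniform bounds of Step 1 and the stability of the state and adjoint equations.
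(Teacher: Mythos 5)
The paper itself contains no proof of Theorem \ref{TConOp}: it is quoted from \cite{BCM24} (``referring to them for all the proofs and details''), so a line-by-line comparison is impossible. Judged on its own, your argument is the natural first-variation/adjoint-state route and its structure is sound, with the correct signs: the convex perturbation $V_t=(1-t)V_{opt}+tW$, the adjoint identity, the convexity of $\psi$ to pass from the directional inequality to $\int_\O\psi(W)\,dx\ge\int_\O\psi(V_{opt})\,dx+\int_\O\bar uv\,(W-V_{opt})\,dx$, the pointwise localization giving $\bar uv\in\partial\psi(V_{opt})$, and then the bounds through $h_-,h$ (finite and nondecreasing by superlinearity and monotonicity of $\partial\psi=(\partial\psi^*)^{-1}$), which together with $\bar uv\in L^\infty(\O)$ yield $V_{opt}\in L^\infty(\O)$. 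As a sanity check, in the compliance case $j(x,u)=f(x)u$ one gets $v=\bar u$ and the condition $\bar u^2\in\partial\psi(V_{opt})$, i.e. $V_{opt}=(\psi^*)'(\bar u^2)$, consistent with the formula stated in Section \ref{spoten}.

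The one genuine issue is exactly the one you flag in Step 2, but it can be settled much more cheaply than by regularizing $f$, $j$ and truncating $V_{opt}$ (truncating the potential would in any case require a delicate stability argument, since it changes both linear equations at once). You never need $\dot u$ nor any pairing of two low-regularity objects: work directly with $w_t=u_t-\bar u$. Since $v\in H^1_0(\O)\cap L^\infty(\O)$, it is a legitimate test function for the equation of $w_t$ (whose natural space is $H^1_0(\O)\cap L^2\big((V_{opt}+W)\,dx\big)$), and $w_t$ is a legitimate test function for the adjoint equation; subtracting the two weak formulations gives
$$\frac1t\int_\O\partial_sj(x,\bar u)\,w_t\,dx=-\int_\O(W-V_{opt})\,\bar u\,v\,dx-\int_\O(W-V_{opt})\,w_t\,v\,dx,$$
and the last term vanishes as $t\to0^+$ by dominated convergence, because $(W-V_{opt})v\in L^1(\O)$, $\|w_t\|_{L^\infty}\le C$ uniformly, and testing the $w_t$-equation with $w_t$ gives $\|w_t\|_{H^1_0}\le C\sqrt t\to0$. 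It then remains to show $\frac1t\int_\O\big(j(x,u_t)-j(x,\bar u)-\partial_sj(x,\bar u)w_t\big)\,dx\to0$, which follows from the mean value theorem, the continuity of $\partial_sj(x,\cdot)$, the domination $\max_{|s|\le M}|\partial_sj(\cdot,s)|\in L^{r/2}(\O)$, and the duality estimate $\|w_t\|_{L^{q^*}}\le Ct$ (right-hand side of size $Ct$ in $L^1$), noting $\big(\tfrac r2\big)'=\tfrac{r}{r-2}<\tfrac{d}{d-2}$ so the exponents match. Two minor remarks: your parenthetical claim that $\bar u$ is Hölder continuous is not justified when $V_{opt}$ is only in $L^1(\O)$ (continuity would require a Kato-class or $L^p$, $p>d/2$, potential), but only $\bar u\in L^\infty(\O)$ is used, so nothing is lost; and the localization in Step 3 should be spelled out by comparing with $W=V_{opt}1_{\O\setminus A}+s_01_A$ for arbitrary measurable $A$ and a countable dense set of $s_0\in\dom(\psi)$, which is routine.
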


\begin{rema}
From \eqref{pcondopt} and the regularity results for elliptic equations, we deduce that the optimal control $V_{opt}$ is more regular if the function $h$ in \eqref{functionh} and the functions $j$ and $f$ satisfy some regularity assumptions. We recall that
\[\begin{cases}
h\text{ is continuous }\Longleftrightarrow \psi\text{ is strictly convex}\\
\ds h\text{ is Lipschitz continuous }\Longleftrightarrow \inf_{s_1,s_2\in \dom(\psi)\atop s_1<s_2}{d_-\psi(s_2)-d_+\psi(s_1)\over s_2-s_1}>0,
\end{cases}\]
where the functions $d_+\psi$ and $d_-\psi$ are defined by
$$\begin{cases}
\ds d_+\psi(s):=\lim_{\eps\searrow0}{\psi(s+\eps)-\psi(s)\over\eps}\in(-\infty,\infty]\\
\ds d_-\psi(s):=\lim_{\eps\searrow0}{\psi(s)-\psi(s-\eps)\over \eps}\in[-\infty,\infty)
\end{cases}$$
\end{rema}

The $BV$ regularity of the optimal control $V_{opt}$ can be deduced from the following result (see \cite{BCM24}).

\begin{theo}\label{Topr}
In addition to the conditions in Theorem \ref{expot} we assume $\O$ of class $C^{1,1}$, the function $g(t):=t\,h(t)$ non-decreasing in $t$, and for every $M>0$
$$\max_{|s|\le M}|\nabla_x\partial_s j(\cdot,s)|\in L^q(\O),\qquad \max_{|s|\le M}|\partial^2_{ss} j(\cdot,s)|\in L^1(\O),$$
with 
$$q\ge{2d\over d+1}\ \hbox{ if }1\le d\le2,\qquad q>{d\over2}\ \hbox{ if }d\ge3.$$
Then, for every $f\in BV(\O)\cap L^q(\O)$ we have
$$\bar u,v\in W^{2,q}(\O),\qquad \bar u vV_{opt}\in BV(\O),$$
with $\bar u$, $v$ the solutions of \eqref{pstate} and \eqref{padj} respectively.
\end{theo}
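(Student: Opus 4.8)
The plan is to prove the two assertions in turn: the $W^{2,q}$-regularity of the state $\bar u$ and the adjoint $v$ is a bootstrap starting from the bound $V_{opt}\in L^\infty(\O)$, while the membership $\bar u v V_{opt}\in BV(\O)$ is extracted from the optimality system by a translation argument in which the monotonicity of $g(t)=t\,h(t)$ is the decisive ingredient.

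\smallskip\noindent\emph{Step 1 ($W^{2,q}$-regularity).} In all cases the exponent conditions force $q>d/2$, so $f\in L^q(\O)$ belongs to $W^{-1,r}(\O)$ for some $r>d$ and Theorem \ref{TConOp} applies, giving $V_{opt}\in L^\infty(\O)$ and the adjoint state $v$ solving \eqref{padj}. Since $V_{opt}\ge0$ is bounded and $f\in L^q(\O)$ with $q>d/2$, Stampacchia's truncation method applied to $-\Delta\bar u+V_{opt}\bar u=f$ gives $\bar u\in L^\infty(\O)$; writing $M=\|\bar u\|_{L^\infty}$, the assumed integrability of $\partial^2_{ss}j$ and $\nabla_x\partial_s j$ for $|s|\le M$ (which, via an integration in $s$ together with $\partial_s j(\cdot,0)$, controls $\partial_s j(\cdot,\bar u)$ in $L^q$) and the same argument applied to \eqref{padj} give $v\in L^\infty(\O)$. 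Rewriting the two equations as $-\Delta\bar u=f-V_{opt}\bar u$ and $-\Delta v=\partial_s j(x,\bar u)-V_{opt}v$, their right-hand sides lie in $L^q(\O)$, and since $\O$ is of class $C^{1,1}$ the Calderón--Zygmund estimates for the Dirichlet Laplacian yield $\bar u,v\in W^{2,q}(\O)$. In particular $2-d/q>0$, so $\bar u,v\in C^0(\overline\O)$ and $\nabla\bar u,\nabla v$ lie in $W^{1,q}(\O)$, which embeds into some $L^{q^*}(\O)$ with $q^*>d$; it is here that the condition $q\ge2d/(d+1)$ for $d\le2$ enters, ensuring that the products $f\,\nabla v$, $\nabla\bar u\cdot\nabla v$, $\bar u\,\nabla_x\partial_s j(\cdot,\bar u)$ and the analogous increments occurring below all belong to $L^1(\O)$.

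\smallskip\noindent\emph{Step 2 (reduction of the $BV$-statement).} By the optimality inclusion $\bar u v\in\partial\psi(V_{opt})$, Young's equality holds a.e.,
\[\bar u v\,V_{opt}=\psi(V_{opt})+\psi^*(\bar u v).\]
Since $\dom\psi\subset[0,\infty)$ and $\psi$ is superlinear, $\psi^*$ is everywhere finite, hence convex and locally Lipschitz on $\R$; as $\bar u v\in W^{1,1}(\O)\cap L^\infty(\O)\subset BV(\O)$, the chain rule for $BV$ functions gives $\psi^*(\bar u v)\in BV(\O)$. Therefore it suffices to prove $\psi(V_{opt})\in BV(\O)$. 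For $\xi\in\R^d$ small set $\tau_\xi w(x)=w(x+\xi)$; working on $\O'\Subset\O$ (the behaviour near $\partial\O$ being recovered afterwards from the $C^{1,1}$-regularity of $\O$ and the $W^{2,q}$-regularity up to the boundary), subtract the state equation from its $\tau_\xi$-translate, do the same for the adjoint equation, test the first difference by $\tau_\xi v+v$ and the second by $\tau_\xi\bar u+\bar u$, and add. After integration by parts the second-order terms are bounded by $|D(\bar u v)|(\O)$ and by $|Df|(\O)$, finite because $f\in BV(\O)$; using Young's equality in translated form, the zero-order terms leave a combination of $\psi(\tau_\xi V_{opt})-\psi(V_{opt})$, of the already-controlled increments of $\psi^*(\bar u v)$, and of the nonnegative term $\big(\tau_\xi(\bar u v)-\bar u v\big)\big(\tau_\xi V_{opt}-V_{opt}\big)$ coming from monotonicity of $\partial\psi$. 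The hypothesis that $g(t)=t\,h(t)$ be non-decreasing is precisely what makes the remaining $V_{opt}$-dependent contributions have a favourable sign, so the estimate closes into $\int_{\O'}|\psi(\tau_\xi V_{opt})-\psi(V_{opt})|\,dx\le C|\xi|$ with $C$ independent of $\xi$; letting $\xi\to0$ gives $\psi(V_{opt})\in BV(\O')$, and the boundary analysis upgrades it to $\psi(V_{opt})\in BV(\O)$. Combined with Step 2 this proves $\bar u v\,V_{opt}\in BV(\O)$, and the bounds $h_-(\bar u v)\le V_{opt}\le h(\bar u v)$ of \eqref{pcondopt} then yield the local $BV$-regularity of $V_{opt}$ away from $\{\bar u v=0\}$.

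\smallskip\noindent I expect the translation estimate to be the genuine obstacle. Because $\psi$ need not be strictly convex, $h$ and $g$ may be discontinuous and $V_{opt}$ is characterised only through the differential inclusion $\bar u v\in\partial\psi(V_{opt})$, so the optimality relation cannot be differentiated; everything rests on choosing the test functions so that both the monotonicity of $\partial\psi$ and that of $g$ are exploited, on handling the level sets $\{\bar u v=c\}$ of positive measure — on which $V_{opt}$ is pinned down only by $2V_{opt}\bar u v=\Delta(\bar u v)-2\nabla\bar u\cdot\nabla v+fv+\bar u\,\partial_s j(x,\bar u)$ and whose $BV$-regularity must be recovered from this identity — and on securing the constant $C$ uniformly in $\xi$, so that the difference quotients of $\psi(V_{opt})$ converge to a bounded measure rather than to something weaker.
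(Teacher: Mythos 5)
Your Step~1 is essentially sound (boundedness of $V_{opt}$ from the optimality conditions, $L^\infty$ bounds on $\bar u$ and $v$, Calder\'on--Zygmund estimates in the $C^{1,1}$ domain, and the correct identification of where $q\ge 2d/(d+1)$ enters, namely in making $\nabla\bar u\cdot\nabla v$ and the related products summable); note that the survey itself gives no proof of Theorem~\ref{Topr} and defers entirely to \cite{BCM24}, so the comparison is with what that argument needs.

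The genuine gap is in Step~2, exactly the point you yourself flag as ``the genuine obstacle''. The reduction via Young's equality to proving $\psi(V_{opt})\in BV(\O)$ is legitimate, but the translation scheme you propose cannot close. Subtracting each of \eqref{pstate}, \eqref{padj} from its translate and testing with $\tau_\xi v+v$ and $\tau_\xi\bar u+\bar u$ yields a purely \emph{signed} identity: the second-order terms collapse to $2\int\big[(\nabla\bar u\cdot\nabla v)(\cdot+\xi)-\nabla\bar u\cdot\nabla v\big]$, the zero-order terms to $\int\big\{2\big[(\bar u v V_{opt})(\cdot+\xi)-\bar u v V_{opt}\big]+\big(V_{opt}(\cdot+\xi)-V_{opt}\big)\big(\bar u(\cdot+\xi)\,v+\bar u\, v(\cdot+\xi)\big)\big\}$, and the right-hand side to translate-differences of $fv$-type terms; no absolute values ever appear, so the identity is essentially a boundary-layer statement and gives no bound on $\int\big|\psi(V_{opt}(\cdot+\xi))-\psi(V_{opt})\big|$. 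The monotonicity of $\partial\psi$ and the hypothesis that $g(t)=t\,h(t)$ be non-decreasing are invoked only as producing ``a favourable sign'', with no mechanism turning signed differences into $L^1$ bounds on increments. What the result actually rests on is the single equation satisfied by $w=\bar u v$, namely $-\Delta w+2V_{opt}w=fv+\bar u\,\partial_sj(x,\bar u)-2\nabla\bar u\cdot\nabla v=:F$, together with the observation that, by \eqref{pcondopt}, $2V_{opt}w$ is a.e.\ a selection of the maximal monotone graph generated by $g$ evaluated at $w$ --- this is the one place where ``$g$ non-decreasing'' is used, and in your scheme it is never used in any identifiable way --- plus the verification that $F\in BV(\O)$ (this is where $f\in BV\cap L^q$ and the hypotheses on $\nabla_x\partial_sj$, $\partial^2_{ss}j$ enter). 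One then runs a Brezis--Strauss/Kato-type $L^1$-contraction between this equation and its translate, testing with a smooth approximation of the sign of $w(\cdot+\xi)-w$: it is that sign test, combined with the monotonicity of the graph, that produces $\int\big|(\bar u vV_{opt})(\cdot+\xi)-\bar u vV_{opt}\big|\le C|\xi|$ and hence the $BV$ bound. You do write the relevant identity for $2V_{opt}\bar u v$ at the very end, but only as an aside about level sets rather than as the engine of the proof; as written, the central estimate of the theorem is asserted, not proved.
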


\begin{rema}
Under the assumptions of Theorem \ref{Topr}, the functions $\bar u$ and $v$ are continuous, and thus, the set $K:=\{\bar u v=0\}$ is a closed subset of $\overline\O$ which contains the boundary $\partial\O$. The fact that $\bar u vV_{opt}$ belongs to $BV(\O)$, proves then that $V_{opt}$ belongs to $BV_{loc}(\O\setminus K)$.
\end{rema}

\begin{rema}\label{bban}
We notice that, under the general assumptions we consider on the function $\psi$, higher regularity properties on the optimal potential $V_{opt}$ do not hold. For instance, when the function $\psi$ is of the form
$$\psi(s)=\begin{cases}
s&\text{if }s\in[\alpha,\beta]\qquad\text{(with $0\le\alpha<\beta$)}\\
+\infty&\text{otherwise,}
\end{cases}$$
the optimal control $V_{opt}$ is of {\it bang-bang} type, that is
$$\hat m=\alpha+(\beta-\alpha)1_E$$
for a suitable set $E$ which, by Theorem \ref{Topr}, is then a set with finite perimeter. In \cite{BCM24} one can find several numerical simulations that show the behavior of the set $E$ above in various situations.
\end{rema}

\begin{exam}\label{Ex31}
In order to show a numerical evidence of Remark \ref{bban}, we consider 
problem (\ref{poten})-(\ref{pstate}), with $\Omega\subset\mathbb{R}^2$ the unit ball,
$$j(x,s)=s,\qquad\psi(s)=\infty 1_{(-\infty,\alpha)\cup(\beta,+\infty)}+ks1_{[\alpha,\beta]},$$
and
$$f(x_1,x_2)=1_{\omega_1}(x_1,x_2)+1_{\omega_2}(x_1,x_2)+1_{\omega_3}(x_1,x_2).$$ 
where $\omega_i=B(c_i, r)$, $i=1,2$, are small balls of radius $r=0.2$ and centers $c_1=(0.35,0.45)$, $c_2=(-0.35,0.45)$ and $\omega_3$ the rectangle $[-0.5,0.5]\times[-0.5,-0.25]$, see Figure \ref{Fpo} feft. Taking $\alpha=0$, $\beta=1$ and $k=0.00225$, in Figure \ref{Fpo} right we show the computed bang-bang optimal potential.

\begin{figure}[!t]
\begin{minipage}[!h]{19cm}
\centering
\hspace{-3.5cm}\includegraphics[width=9.5cm]{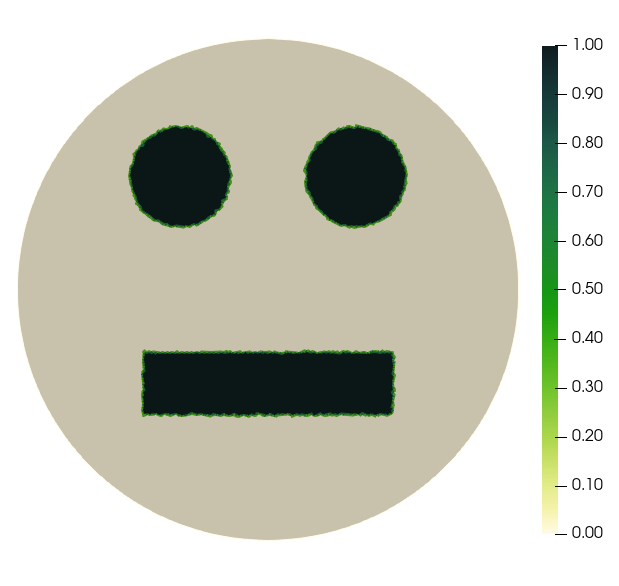}\hspace{-.2cm}
\includegraphics[width=9.5cm]{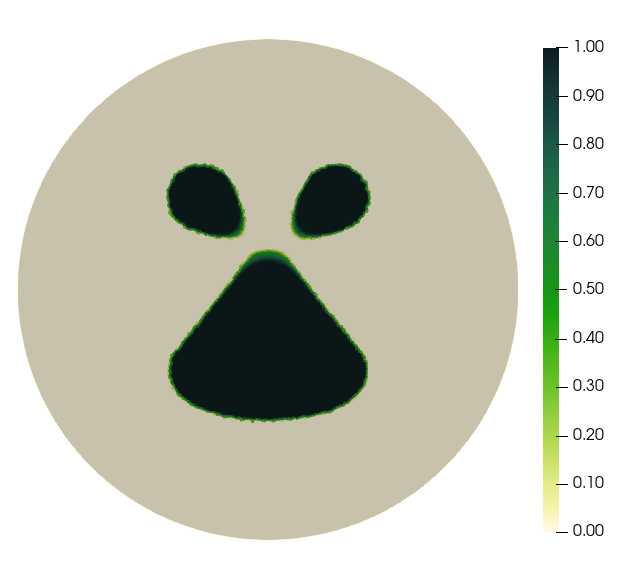}
\end{minipage}
\caption{Left: Right-hand side function $f$. Right: Computed optimal potential.} \label{Fpo}
\end{figure}
\end{exam}

\section{Optimal sources}\label{srhs}

In this section we consider optimal control problems for the Poisson equation on a bounded domain $\O\subset\R^d$ with homogeneous Dirichlet boundary conditions. The control variable is the source term $f$, constrained to an admissible class $\F$. The state equation is
\be\label{rhs}
\begin{cases}
-\Delta u=f&\hbox{in }\O\\
u\in H^1_0(\O).
\end{cases}\ee
with $u_f$ denoting the unique weak solution for a given $f$. The goal is to minimize a cost functional of the form
\be\label{J}
J(f)=\int_\O j(x,u_f,f)\,dx,
\ee
subject to the constraint $f\in\F$, where the admissible class $\F$ is defined by
$$\F=\bigg\{\int_\O\psi(f)\,dx\le m\bigg\},$$
for a convex, lower semicontinuous function $\psi:\R\to[0,\infty]$ such that
$$\begin{cases}
{\rm int}(D(\psi))\not=\emptyset\ \hbox{ with }D(\psi)=\big\{s\in\R:\ \psi(s)<\infty\}\\
\ds\lim_{|s|\to+\infty}\psi(s)=+\infty.
\end{cases}$$
The optimization problem then takes the form
\be\label{optpb}
\min\bigg\{\int_\O j(x,u_f,f)\,dx\ :\ \int_\O\psi(f)\,dx\le m\bigg\}.
\ee
A particularly interesting case occurs when $\psi(s)=+\infty$ outside an interval $[\alpha,\beta]$; in this setting, the optimal control may exhibit a {\it bang-bang} behavior, taking only the values $\alpha$ and $\beta$, that is
$$f=\beta1_E+\alpha1_{\O\setminus E}$$
for some measurable set $E\subset\O$. The problem then becomes a shape optimization problem, with $E$ as the control variable.

Our goal is to analyze the regularity of the optimal sources $f_{opt}$ which, in the case of bang-bang solutions, lead to the regularity of the optimal sets $E$ above. We refer to the article \cite{BCM26} for all the details, and for some numerical simulations that illustrate these phenomena and provide examples of optimal configurations.

A first existence result for optimal solutions is when the function $\psi$ satisfies the superlinear growth condition
\be\label{superl}
\lim_{|s|\to+\infty}\frac{\psi(s)}{|s|}=+\infty.
\ee
We also assume that the functional \eqref{J} is lower semicontinuous with respect to the weak $L^1(\O)$ topology, which is guaranteed (see for instance \cite {B89}) by assuming that the integrand $j(x, \cdot,\cdot)$ is lower semicontinuous in its arguments for almost every $x$, and that $j(x,s,\cdot)$ is convex for almost every $x$ and every $s$.

\begin{theo}\label{ex1}
Assume that the functional \eqref{J} is lower semicontinuous with respect to the weak $L^1(\O)$ topology, that the integrand $j(x,s,z)$ satisfies the growth condition
$$-c|s|^p-a(x)\le j(x,s,z),\qquad\text{with }c>0,\ a\in L^1(\O),\ p<d/(d-2),$$
and that the function $\psi$ satisfies the superlinear growth condition \eqref{superl}. Then the optimization problem \eqref{optpb} admits at least one solution $f_{opt}\in L^1(\O)$.
\end{theo}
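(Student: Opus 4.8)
\medskip\noindent
The plan is to run the direct method of the calculus of variations. Let $(f_n)\subset\F$ be a minimizing sequence, so that $\int_\O j(x,u_{f_n},f_n)\,dx$ converges to the infimum in \eqref{optpb}. The first task is compactness: from $\int_\O\psi(f_n)\,dx\le m$ and the superlinear growth \eqref{superl}, the De La Vall\'ee Poussin criterion shows that $(f_n)$ is equi-integrable and bounded in $L^1(\O)$ (indeed, splitting $\int_\O|f_n|=\int_{\{|f_n|\le R\}}|f_n|+\int_{\{|f_n|>R\}}|f_n|$ and using $\psi(s)\ge M|s|$ for $|s|>R$ with $R=R(M)$ large). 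By the Dunford--Pettis theorem, along a subsequence $f_n\rightharpoonup f_{opt}$ weakly in $L^1(\O)$. Since $f\mapsto\int_\O\psi(f)\,dx$ is convex and strongly lower semicontinuous on $L^1(\O)$, it is weakly lower semicontinuous, hence $\int_\O\psi(f_{opt})\,dx\le\liminf_n\int_\O\psi(f_n)\,dx\le m$ and $f_{opt}\in\F$. (That $\F$ is nonempty follows from ${\rm int}(D(\psi))\neq\emptyset$ once $m$ is large enough; the infimum is then finite, as checked at the end.)

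Next I would pass to the limit in the state equation. By the classical Stampacchia estimates for \eqref{rhs} with $L^1$ data, the $L^1$-bound on $(f_n)$ gives a uniform bound for $u_{f_n}$ in $W^{1,q}_0(\O)$ for every $q<d/(d-1)$, hence, by Sobolev embedding, in $L^{q^*}(\O)$ with $q^*=dq/(d-q)$; as $q\nearrow d/(d-1)$ the exponent $q^*$ tends to $d/(d-2)$, so $(u_{f_n})$ is bounded in $L^r(\O)$ for every $r<d/(d-2)$. By the Rellich--Kondrachov theorem, $W^{1,q}_0(\O)$ embeds compactly into $L^r(\O)$ for $r<q^*$, so a diagonal argument yields a subsequence with $u_{f_n}\to u^\ast$ strongly in $L^r(\O)$ for every $r<d/(d-2)$ and weakly in $W^{1,q}_0(\O)$. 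Passing to the limit in $\int_\O\nabla u_{f_n}\cdot\nabla\varphi\,dx=\int_\O f_n\varphi\,dx$ for $\varphi\in C^\infty_c(\O)$ — the right-hand side converging because $f_n\rightharpoonup f_{opt}$ in $L^1$ and $\varphi\in L^\infty$ — shows that $u^\ast$ solves \eqref{rhs} with datum $f_{opt}$, so by uniqueness $u^\ast=u_{f_{opt}}$. Since $p<d/(d-2)$, we may pick $r\in(p,d/(d-2))$, whence $u_{f_n}\to u_{f_{opt}}$ strongly in $L^p(\O)$ and $(|u_{f_n}|^p)$ is equi-integrable.

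Finally I would conclude by lower semicontinuity of the cost. With $u_{f_n}\to u_{f_{opt}}$ strongly in $L^p(\O)$ (hence in measure) and $f_n\rightharpoonup f_{opt}$ weakly in $L^1(\O)$, the assumed lower semicontinuity of \eqref{J} — which, as recalled in the text and in \cite{B89}, holds for this joint convergence (strong $L^p$ in the state slot, weak $L^1$ in the control slot) by the lower semicontinuity of $j(x,\cdot,\cdot)$, the convexity of $j(x,s,\cdot)$, and the lower bound $j(x,s,z)\ge-c|s|^p-a(x)$, the minorant $-c|u_{f_n}|^p-a$ being equi-integrable by the previous step — gives
$$\int_\O j(x,u_{f_{opt}},f_{opt})\,dx\le\liminf_n\int_\O j(x,u_{f_n},f_n)\,dx,$$
and the right-hand side is exactly the infimum in \eqref{optpb}. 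Together with $f_{opt}\in\F$ this proves that $f_{opt}$ is a minimizer. The infimum is finite because $\{f\in L^1(\O):\int_\O\psi(f)\,dx\le m\}$ is $L^1$-bounded, whence $\|u_f\|_{L^p(\O)}$ is uniformly bounded on $\F$ and $\int_\O j(x,u_f,f)\,dx\ge-c\|u_f\|_{L^p}^p-\|a\|_{L^1}$ is bounded below.

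I expect the main obstacle to be the second step: deducing strong $L^p$-convergence of the states from only weak $L^1$-convergence of the sources. This is where the $L^1$-elliptic regularity theory (the solutions $u_{f_n}$ a priori belong only to $W^{1,q}_0(\O)$ for $q<d/(d-1)$, not to $H^1_0(\O)$) has to be combined with the compactness of the Sobolev embedding, and it is precisely the hypothesis $p<d/(d-2)$ that places $L^p(\O)$ inside the range where this embedding is compact for $q$ close enough to $d/(d-1)$. Without that restriction the minorant $-c|u_{f_n}|^p-a$ could fail to be equi-integrable along the minimizing sequence and the semicontinuity step would break down.
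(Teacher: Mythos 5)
Your argument is correct and is the natural direct-method proof: De La Vall\'ee Poussin plus Dunford--Pettis for weak $L^1$ compactness of the minimizing sequence, weak closedness of $\F$ by convexity of $\psi$, passage to the limit in the state equation via the $L^1$-elliptic estimates ($u_{f_n}$ bounded in $W^{1,q}_0(\O)$ for $q<d/(d-1)$, compact embedding into $L^r(\O)$ for $r<d/(d-2)$, which is exactly where $p<d/(d-2)$ enters), and then the assumed weak-$L^1$ lower semicontinuity of the cost together with the equi-integrable minorant $-c|u_{f_n}|^p-a$. The survey itself states the theorem without proof, deferring to \cite{BCM26}, and your proposal coincides with the intended argument; the only points treated lightly (nonemptiness of $\F$ and the precise notion/uniqueness of the solution of \eqref{rhs} for $L^1$ data, e.g.\ via duality) are glossed over at the same level in the paper.
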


When the function $\psi$ has only a linear growth:
\be\label{linear}
c|s|-a\le\psi(s)\qquad\text{for some constants }c>0,\ a\in\R,
\ee
the situation is more delicate, since optimal solutions can be measures, and the integral $\int_\O\psi(f)$ must be interpreted in the sense of measures, namely:
\be\label{psim}
\int_\O\psi(f)=\int_\O\psi\big(f^a(x)\big)\,dx+c^+(\psi)\int df_+^s-c^-(\psi)\int df_-^s
\ee
where $f=f^adx+f^s$ is the Radon-Nikodym decomposition of the measure $f$ into absolutely continuous and singular parts, $f_+$ and $f_-$ denote the positive and negative parts of $f$, and $c^-(\psi)$ and $c^+(\psi)$ are the recession limits of $\psi$, defined by
$$c^-(\psi)=\lim_{s\to-\infty}\frac{\psi(s)}{s}\qquad c^+(\psi)=\lim_{s\to+\infty}\frac{\psi(s)}{s}\;.$$
It is well-known (see \cite{B89}) that functionals of the form \eqref{psim} are lower semicontinuous with respect to the weak* convergence of measures.

The lower semicontinuity of the functional $J$ in \eqref{J} with respect to the weak* convergence of measures requires in this case more particular assumptions (see for instance \cite{BB90} for more general cases). Here we suppose the integrand $j(x,s,z)$ admits the decomposition in the form
$$j(x,s,z)=A(x,s)+B(x,z),$$
where the functions $A$ and $B$ satisfy the following properties:
\begin{itemize}
\item[-]the function $A(x,\cdot)$ is lower semicontinuous for almost every $x\in\O$;
\item[-]we have
$$A(x,s)\ge-c|s|^p+a(x)$$
for suitable $c>0$, $p<d/(d-2)$, $a\in L^1(\O)$;
\item[-]the function $B(x,\cdot)$ is convex and lower semicontinuous for almost every $x\in\O$;
\item[-]the recession function
$$B^\infty(x,z)=\lim_{t\to+\infty}\frac{B(x,tz)}{t}$$
is lower semicontinuous with respect to both variables $(x,z)$;
\item[-]we have
$$B(x,z)\ge a_0(x)z+a_1(x)$$
for suitable functions $a_0\in C_0(\O)$ and $a_1\in L^1(\O)$.
\end{itemize}
The assumptions above are sufficient to obtain the lower semicontinuity of the functional \eqref{J} with respect to the weak* convergence of measures, and we have the following existence result.

\begin{theo}\label{ex2}
Assume that the functional \eqref{J} verifies the conditions above for the lower semicontinuity with respect to the weak* convergence of measures, that the integrand $j$ satisfies the growth condition
$$-c|s|^p-a(x)\le j(x,s,z),\qquad\text{for some }c>0,\ a\in L^1(\O),\ p<d/(d-2),$$
and that the function $\psi$ has the linear growth \eqref{linear}. Then the optimization problem \eqref{optpb} admits at least one optimal solution $f_{opt}$, which is a measure with finite total variation.
\end{theo}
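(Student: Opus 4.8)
The plan is to run the direct method of the calculus of variations in the space $\M(\O)$ of signed measures of finite total variation. First I would fix a minimizing sequence $(f_n)$ for \eqref{optpb}, after noting that the infimum is finite: the admissible class is nonempty since $\mathrm{int}(D(\psi))\neq\emptyset$ (taking $m$ large enough if necessary), and $J$ is bounded below because the linear minorant \eqref{linear} together with $\int_\O\psi(f_n)\le m$ forces $|f_n|(\O)\le R$ for a fixed constant $R$, and then, by the a priori estimates recalled in the next step, $\|u_{f_n}\|_{L^p(\O)}\le C$, so the growth condition on $j$ gives $J(f_n)\ge -cC^p-\|a\|_{L^1}$. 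From the uniform bound $|f_n|(\O)\le R$ I extract a subsequence, not relabeled, with $f_n\weak f$ in $\M(\O)$ and $|f|(\O)\le R$; this $f$ is the candidate optimal control, and it is a measure of finite total variation as required.

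The second step is to pass to the limit in the state equation. Here the key fact is that $\M(\O)$ embeds continuously into $W^{-1,q}(\O)$ for every $q\in[1,d/(d-1))$ (by duality with $W^{1,q'}_0(\O)\hookrightarrow C_0(\O)$, valid for $q'>d$), and that the solution of \eqref{rhs} with right-hand side $g\in\M(\O)$ belongs to $W^{1,q}_0(\O)$ with norm controlled by $|g|(\O)$ for every such $q$ (Stampacchia, Boccardo--Gallou\"et). Hence $(u_{f_n})$ is bounded in $W^{1,q}_0(\O)$ for each $q<d/(d-1)$. Since the Sobolev exponent $q^*=dq/(d-q)$ increases to $d/(d-2)$ as $q\uparrow d/(d-1)$, for the exponent $p<d/(d-2)$ of the growth hypothesis I can choose $q$ with $p<q^*$, so that the compact embedding $W^{1,q}_0(\O)\hookrightarrow\hookrightarrow L^p(\O)$ yields, along a further subsequence, $u_{f_n}\to u_f$ strongly in $L^p(\O)$, a.e.\ in $\O$, and dominated by some $G\in L^p(\O)$. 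Testing the weak formulation of \eqref{rhs} against $\varphi\in C^\infty_c(\O)\subset C_0(\O)$ and using $f_n\weak f$ identifies $u_f$ as the state associated with $f$.

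The third step is lower semicontinuity, split into admissibility and cost. For admissibility I would invoke the lower semicontinuity of $\mu\mapsto\int_\O\psi(\mu)$ of the form \eqref{psim} with respect to weak* convergence of measures (recalled above, see \cite{B89}), which gives $\int_\O\psi(f)\le\liminf_n\int_\O\psi(f_n)\le m$, i.e.\ $f\in\F$. For the cost I would use the decomposition $j(x,s,z)=A(x,s)+B(x,z)$. The state part is handled by Fatou's lemma applied to the nonnegative functions $A(x,u_{f_n})+cG^p-a$: using that $A(x,\cdot)$ is lower semicontinuous, $A(x,u_{f_n})\ge -c|u_{f_n}|^p+a\ge -cG^p+a$ with $-cG^p+a\in L^1(\O)$, and $u_{f_n}\to u_f$ a.e., one gets $\int_\O A(x,u_f)\,dx\le\liminf_n\int_\O A(x,u_{f_n})\,dx$. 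The control part is the Reshetnyak-type lower semicontinuity theorem for convex integral functionals of measures: convexity and lower semicontinuity of $B(x,\cdot)$, joint lower semicontinuity of the recession function $B^\infty$, and the continuous affine minorant $B(x,z)\ge a_0(x)z+a_1(x)$ with $a_0\in C_0(\O)$, $a_1\in L^1(\O)$ give $\int_\O B(x,f)\le\liminf_n\int_\O B(x,f_n)$ (see \cite{B89}, \cite{BB90}). Adding the two inequalities and using superadditivity of $\liminf$, $J(f)\le\liminf_n J(f_n)$, so $f$ is a minimizer.

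I expect the main obstacle to be the lower semicontinuity of the control term $\int_\O B(x,f_n)$ along the weakly* converging sequence of measures: this is the genuinely measure-theoretic ingredient, and it is exactly what forces the structural assumptions on $B$ (separate convexity and lower semicontinuity in $z$, joint lower semicontinuity of $B^\infty$, and a $C_0$-affine minorant) rather than merely the growth and semicontinuity conditions of Theorem \ref{ex1}. The second delicate point is obtaining the strong $L^p$ convergence of the states with $p$ matched to $d/(d-2)$; this is what dictates the restriction $p<d/(d-2)$ on the growth exponent in the statement, and it degenerates at $p=d/(d-2)$ since the critical Sobolev embedding $W^{1,d/(d-1)}_0(\O)\hookrightarrow L^{d/(d-2)}(\O)$ is no longer compact.
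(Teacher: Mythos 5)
Your proposal is correct and follows essentially the route the paper intends (the proof is deferred to \cite{BCM26}, but the hypotheses are set up precisely for this argument): weak* compactness of the minimizing sequence in $\M(\O)$ from the linear growth of $\psi$, convergence of the states via the $W^{1,q}_0(\O)$ estimates for measure data and the compact embedding into $L^p(\O)$ with $p<d/(d-2)$, weak* lower semicontinuity of the constraint functional \eqref{psim}, and the split of $j$ into $A(x,s)+B(x,z)$ with Fatou for the state part and the Reshetnyak--Bouchitt\'e--Buttazzo lower semicontinuity for the measure part. Only cosmetic caveats: the nonemptiness of $\F$ is an implicit hypothesis rather than something to be arranged by ``taking $m$ large'', and the lower bound on $J(f_n)$ should also use $B(x,z)\ge a_0(x)z+a_1(x)$ (and its recession consequence) to control the singular part of $f_n$, which your total variation bound makes immediate.
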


Once the issues related to the existence of solutions, as addressed in Theorems \ref{ex1} and \ref{ex2}, have been clarified, we are now in a position to examine the necessary optimality conditions that any solution to the problem must satisfy. To this end, it is convenient to introduce the resolvent operator $\RR$, which plays a central role in the subsequent analysis. The operator $\RR$ is defined by associating to each function $f$ the unique solution $u$ of the partial differential equation given by \eqref{rhs}. It is well-known that the operator $\RR$ is self-adjoint.

We begin our analysis with the case in which the function $\psi$ has a superlinear growth, as described in condition \eqref{superl}.

\begin{theo}\label{Thcop}
Let us assume that the integrand $j$, appearing in the definition of the cost functional in the optimal control problem \eqref{optpb}, satisfies the following growth condition:
$$|j(x,s,z)|\le a(x)+c|s|^p,\qquad\hbox{with }c>0,\ a\in L^1(\O),\ p<d/(d-2).$$
In addition, we suppose that the function $\psi$ satisfies the superlinear growth condition given in \eqref{superl}. Moreover we assume that, for almost every $x\in\O$ and for all $(s,z)\in\R^2$, the partial derivatives $\partial_sj(x,s,z)$ and $\partial_zj(x,s,z)$ exist and fulfill the inequalities:
$$\begin{cases}
|\partial_s j(x,s,z)|\le b(x)+\gamma\big(|s|^\sigma+|z|^\tau\big)\\
|\partial_z j(x,s,z)|\le\gamma,
\end{cases}$$
where $\gamma>0$, $b\in L^q(\O)$ with $q>d/2$, $\sigma<2/(d-2)$, and $\tau<2/d$.

Then, if $f_{opt}$ is an optimal solution to the problem \eqref{optpb}, there exists a non-negative scalar $\lambda\ge0$ such that, setting
$$w:=\RR\big(\partial_sj(x,\RR(f_{opt}),f_{opt})\big)+\partial_zj(x,\RR(f_{opt}),f_{opt}),$$
the following alternative holds:

\begin{itemize}
\item If $\lambda=0$, then
$$\begin{cases}
w\ge0\hbox{ a.e. in }\O\hbox{ if }\sup\big(\dom(\psi)\big)=+\infty\\
w\le0\hbox{ a.e. in }\O\hbox{ if }\inf\big(\dom(\psi)\big)=-\infty\\
f_{opt}=\min\big(\dom(\psi)\big)\hbox{ a.e. in }\big\{w>0\big\}\\
f_{opt}=\max\big(\dom(\psi)\big)\hbox{ a.e. in }\big\{w<0\big\}.
\end{cases}$$
\item If $\lambda>0$, then $\int_\O\psi(f_{opt})dx=m$ and
$$\psi\big(f_{opt}\big)+\psi^*\big(-{w\over\lambda}\big)=-{wf_{opt}\over\lambda}\qquad\hbox{ a.e. in }\O.$$
\end{itemize}
Moreover, if the function $j(x,\cdot,\cdot)$ is convex for almost every $x\in\O$, then the conditions stated above are not only necessary for optimality but also sufficient.
\end{theo}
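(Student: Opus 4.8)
The plan is to derive the optimality conditions by a standard Lagrange multiplier argument for the constrained minimization \eqref{optpb}, combined with a pointwise analysis of the resulting variational inequality. First I would fix an optimal $f_{opt}$ and consider competitors $f_t = f_{opt} + t(g - f_{opt})$ for admissible $g\in\F$ and $t\in[0,1]$; by convexity of $\dom(\psi)$ and of $\psi$, each $f_t$ lies in $\F$. Differentiating the cost $J(f_t)$ at $t=0^+$, using that the map $f\mapsto \RR(f)=u_f$ is linear and the self-adjointness of $\RR$, the Gâteaux derivative is $\int_\O w\,(g-f_{opt})\,dx$ with $w$ exactly the function defined in the statement: the term $\RR(\partial_s j(x,\RR(f_{opt}),f_{opt}))$ comes from transferring the derivative through the state equation via the adjoint (which here coincides with $\RR$ applied to $\partial_s j$), and $\partial_z j$ is the direct dependence on the control. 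The growth conditions on $j$, $\partial_s j$, $\partial_z j$ (with the exponents $p, \sigma, \tau, q$ tuned to Sobolev embeddings for $\RR: L^1\to$ some $L^r$) are precisely what is needed to justify differentiation under the integral and to ensure $w\in L^1$ or better, so these estimates I would invoke but not belabor.

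Next I would introduce the Lagrange multiplier. Since $f_{opt}$ minimizes $J$ over the convex set $\{\int\psi(f)\le m\}$, by a Karush–Kuhn–Tucker / Lagrange argument (in the convex-constraint setting this is the generalized KKT theorem, using that $\mathrm{int}\,\dom(\psi)\neq\emptyset$ gives a Slater-type qualification) there is $\lambda\ge0$ with $\lambda\big(\int\psi(f_{opt})-m\big)=0$ such that $f_{opt}$ minimizes the Lagrangian $f\mapsto \int_\O\big(w\,f + \lambda\psi(f)\big)\,dx$ over all admissible $f$ — or more carefully, such that the variational inequality $\int_\O w(g-f_{opt})\,dx + \lambda\int_\O(\psi(g)-\psi(f_{opt}))\,dx \ge 0$ holds for all $g$. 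Because this inequality decouples pointwise (the integrands involve $f(x)$ only through its value at $x$), it is equivalent to: for a.e.\ $x$, $f_{opt}(x)$ minimizes $s\mapsto w(x)s + \lambda\psi(s)$ over $s\in\dom(\psi)$.

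Then I would split into the two cases. If $\lambda=0$: $f_{opt}(x)$ minimizes $s\mapsto w(x)s$ over $\dom(\psi)$; if $w(x)>0$ the minimizer is $\min\dom(\psi)$, if $w(x)<0$ it is $\max\dom(\psi)$, giving the third and fourth lines; and the minimization is finite for a.e.\ $x$ only if $w\ge0$ wherever $\sup\dom(\psi)=+\infty$ and $w\le0$ wherever $\inf\dom(\psi)=-\infty$, giving the first two lines. If $\lambda>0$: first $\lambda>0$ forces $\int\psi(f_{opt})=m$ by complementary slackness; and $f_{opt}(x)$ minimizing $s\mapsto w(x)s+\lambda\psi(s)$ means $-w(x)/\lambda \in \partial\psi(f_{opt}(x))$, which by the Fenchel equality $\psi(a)+\psi^*(b)=ab \iff b\in\partial\psi(a)$ is exactly $\psi(f_{opt})+\psi^*(-w/\lambda) = -wf_{opt}/\lambda$ a.e. Finally, for the sufficiency statement when $j(x,\cdot,\cdot)$ is convex: then $f\mapsto J(f)$ is convex (composition of convex $j$ with the affine map $f\mapsto(\RR f, f)$, integrated), so $J(g)\ge J(f_{opt}) + \int_\O w(g-f_{opt})\,dx$; combining with the variational inequality above and $\lambda\ge0$, $\psi(g)\le m$ forces $\int w(g-f_{opt})\ge 0$ when $\lambda=0$ and $\int w(g-f_{opt}) \ge \lambda(m - \int\psi(g))\ge 0$ when $\lambda>0$, hence $J(g)\ge J(f_{opt})$.

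The main obstacle I expect is twofold. First, the rigorous justification of the Lagrange multiplier existence: the constraint functional $f\mapsto\int\psi(f)$ may be only lower semicontinuous and non-smooth (indeed $\psi$ can take the value $+\infty$), so one cannot blindly cite smooth KKT; one must either use a convex-analytic subdifferential formulation (the inequality version stated above, which avoids ever differentiating $\psi$) or handle the case $\mathrm{int}\,\F$ interacting with $\dom(\psi)$ carefully, and rule out the degenerate multiplier by the qualification hypothesis. Second, the differentiability and integrability bookkeeping: one must confirm that for $f\in\F$ (hence $f\in L^1$, or a measure in the Theorem~\ref{ex2} regime — though here we are in the superlinear case so $f\in L^1$) the quantities $\RR(f)$, $\partial_s j(x,\RR f, f)$, $\RR(\partial_s j(\cdots))$, $\partial_z j(x,\RR f, f)$ all lie in the function spaces making the pairing $\int w(g-f_{opt})$ well-defined and the difference quotient of $J$ dominated — this is where the exponent conditions $p,\sigma<2/(d-2)$, $\tau<2/d$, $q>d/2$ enter via the mapping properties of $\RR$ on $L^1$ and Sobolev embedding, and it is routine but needs care near the borderline exponents.
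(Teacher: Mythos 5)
This survey does not actually prove Theorem \ref{Thcop}: all proofs are deferred to \cite{BCM26}, so your proposal can only be measured against the expected argument, which is indeed the one you reproduce. You differentiate $t\mapsto J(f_{opt}+t(g-f_{opt}))$ along convex perturbations, use the self-adjointness of $\RR$ to write the derivative as $\int_\O w\,(g-f_{opt})\,dx$ with exactly the $w$ of the statement (the exponents $p,\sigma,\tau,q$ are what make $\partial_sj(x,\RR f_{opt},f_{opt})$ lie in some $L^{q_0}$ with $q_0>d/2$, hence $w\in L^\infty$, so the pairing with $L^1$ competitors and the dominated-convergence bookkeeping are legitimate), introduce a multiplier for the convex constraint, localize to the pointwise problem $\min_s\{w(x)s+\lambda\psi(s)\}$, and convert via Fenchel's equality; your sufficiency argument under convexity of $j$ (integrate the subgradient inequality $w(g-f_{opt})\ge\lambda\big(\psi(f_{opt})-\psi(g)\big)$ and use $\int_\O\psi(g)\,dx\le m$) is complete and correct.

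The one genuinely contestable step is the existence of $\lambda$. Your claim that ${\rm int}(D(\psi))\neq\emptyset$ provides a Slater-type qualification is not accurate: Slater for the integral constraint requires an admissible $g$ with $\int_\O\psi(g)\,dx<m$ strictly, and this does not follow from $\psi$ being finite on an open interval. The point is not cosmetic: with $\psi(s)=s^2$ and $m=0$ the feasible set reduces to $\{f=0\}$, $f_{opt}=0$, and no $\lambda\ge0$ satisfying the stated alternative exists unless $w=0$; so some nondegeneracy (a strictly feasible source, implicitly present in the setting of \cite{BCM26}) must enter the argument. The clean way to organize this is: from the variational inequality, $f_{opt}$ minimizes the linear functional $g\mapsto\int_\O wg\,dx$ over $\F$; separate in $\R^2$ the convex set generated by the pairs $\big(\int_\O w(g-f_{opt})\,dx,\ \int_\O\psi(g)\,dx-m\big)$ augmented by the positive quadrant from the origin, obtaining Fritz John multipliers $(\lambda_0,\lambda)\ge0$ not both zero; the strictly feasible competitor rules out $\lambda_0=0$, after which you normalize $\lambda_0=1$ and recover your Lagrangian inequality and complementary slackness. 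Finally, the passage from the integrated Lagrangian inequality to the a.e.\ pointwise minimization deserves a line (perturb $g$ only on small sets where the competitor value keeps $\psi$ integrable, i.e.\ a localization/measurable-selection argument), but that part is routine.
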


When the function $\psi$ has a linear growth, as described in condition \eqref{linear} we have a similar result.

\begin{theo}\label{Thcop2}
Suppose that the function $\psi$ has a linear growth and that the integrand $j$ depends only on $(x,s)$ and not on $z$, with the growth condition
$$|j(x,s)|\le a(x)+c|s|^p,\qquad\hbox{with }c>0,\ a\in L^1(\O),\ p<d/(d-2).$$
We also assume that for almost every $x\in\O$ and every $s\in\R$, the partial derivative $\partial_sj(x,s)$ exists and satisfies
$$|\partial_s j(x,s)|\le b(x)+\gamma|s|^\sigma,$$
where again $\gamma>0$, $b\in L^q(\O)$ with $q>d/2$, $\sigma<2/(d-2)$.

Then, as above, if $f_{opt}$ is an optimal solution to the problem \eqref{optpb}, there exists a non-negative scalar $\lambda\ge0$ such that, setting
$$w:=\RR\big(\partial_sj(x,\RR(f_{opt}))\big)+\partial_zj(x,\RR(f_{opt})),$$
the following alternative holds:

\begin{itemize}
\item If $\lambda=0$, then
$$\begin{cases}
w\ge0\hbox{ a.e. in }\O\hbox{ if }\sup\big(\dom(\psi)\big)=+\infty\\
w\le0\hbox{ a.e. in }\O\hbox{ if }\inf\big(\dom(\psi)\big)=-\infty\\
f^a_{opt}=\min\big(\dom(\psi)\big)\hbox{ a.e. in }\big\{w>0\big\}\\
f^a_{opt}=\max\big(\dom(\psi)\big)\hbox{ a.e. in }\big\{w<0\big\}\\
{\rm supp}(f^s_{opt})\subset \{w=0\}.
\end{cases}$$
\item If $\lambda>0$, then $\int_\O\psi(f_{opt})=m$ and
$$\begin{cases}
\ds\psi\big(f_{opt}^a\big)+\psi^*\big(-{w\over\lambda}\big)=-{wf^a_{opt}\over\lambda}\hbox{ a.e. in }\O\\
-\lambda c^+(\psi)\le w\le -\lambda c^-(\psi)\hbox{ a.e. in }\O\\
{\rm supp}(f_{opt,+}^s)\subset\big\{w+\lambda c^+(\psi))=0\big\}\\
{\rm supp}(f_{opt,-}^s)\subset\big\{w+\lambda c^-(\psi))=0\big\}.
\end{cases}$$
\end{itemize}
Again, if the function $j(x,\cdot)$ is convex for almost every $x\in\O$, then the conditions stated above are not only necessary for optimality but also sufficient.
\end{theo}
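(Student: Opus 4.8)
The plan is to adapt, to the measure-valued framework forced by the linear growth of $\psi$, the Lagrangian argument underlying Theorem \ref{Thcop}. The first step is to reduce the problem to a linearized variational inequality. Since $\psi$ is convex, the admissible set $\{f:\int_\O\psi(f)\le m\}$ — with $\int_\O\psi(f)$ read in the sense of \eqref{psim} — is convex, so for every admissible $f$ and every $t\in[0,1]$ the measure $f_t=(1-t)f_{opt}+tf$ is admissible and $J(f_t)\ge J(f_{opt})$. Writing $u_{opt}=\RR(f_{opt})$, $u_f=\RR(f)$, so that $\RR(f_t)=(1-t)u_{opt}+tu_f$, differentiating at $t=0^+$ should give
$$0\le\frac{d}{dt}J(f_t)\Big|_{t=0^+}=\int_\O\partial_sj(x,u_{opt})(u_f-u_{opt})\,dx=\int_\O\RR\big(\partial_sj(\cdot,u_{opt})\big)\,d(f-f_{opt}),$$
the last equality because $\RR$ is self-adjoint and $u_f-u_{opt}=\RR(f-f_{opt})$. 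To pass the derivative inside the integral I would use that, $f$ being a finite measure and $p<d/(d-2)$, the solution $u$ lies in $L^r(\O)$ for every $r<d/(d-2)$, and that the bound $|\partial_sj(x,s)|\le b(x)+\gamma|s|^\sigma$ with $b\in L^q(\O)$, $q>d/2$, $\sigma<2/(d-2)$, furnishes an integrable majorant; the same bound makes $\partial_sj(\cdot,u_{opt})\in L^{p_0}(\O)$ for some $p_0>d/2$, hence $w:=\RR(\partial_sj(\cdot,u_{opt}))\in W^{2,p_0}(\O)\hookrightarrow C(\overline\O)$ is well defined and coincides with the function $w$ of the statement (the $\partial_zj$ term being absent since $j=j(x,s)$). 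The outcome of this step is that $f_{opt}$ minimizes the linear functional $f\mapsto\int_\O w\,df$ over the convex admissible set.

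Next I would introduce the multiplier. Assuming a constraint qualification (an admissible $f$ with $\int_\O\psi(f)<m$, which follows from $\mathrm{int}(D(\psi))\ne\emptyset$ once the degenerate case where $m$ equals the minimal attainable cost is set aside), convex duality produces $\lambda\ge0$ such that $f_{opt}$ minimizes $f\mapsto\int_\O w\,df+\lambda\int_\O\psi(f)$ over all measures of finite total variation, together with the complementary slackness relation $\lambda\big(\int_\O\psi(f_{opt})-m\big)=0$. Then, using \eqref{psim}, I would split this functional as
$$\int_\O\big(wf^a+\lambda\psi(f^a)\big)\,dx+\int_\O\big(w+\lambda c^+(\psi)\big)\,df^s_+-\int_\O\big(w+\lambda c^-(\psi)\big)\,df^s_-,$$
and read the optimality conditions off pointwise: $f^a_{opt}(x)$ must minimize $t\mapsto w(x)t+\lambda\psi(t)$ for a.e.\ $x$, while, since $f^s_\pm\ge0$, the coefficients $w+\lambda c^\pm(\psi)$ must keep the sign making the corresponding integrals non-negative and the measures $f^s_{opt,\pm}$ must be concentrated where those coefficients vanish. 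For $\lambda>0$ the first requirement is exactly the Fenchel identity $\psi(f^a_{opt})+\psi^*(-w/\lambda)=-wf^a_{opt}/\lambda$, finiteness of the minimum over $t\in\R$ yields $-\lambda c^+(\psi)\le w\le-\lambda c^-(\psi)$, and together with the support conditions and complementary slackness this is the $\lambda>0$ alternative. For $\lambda=0$, minimizing $t\mapsto w(x)t$ over $t\in\dom(\psi)$ gives $f^a_{opt}=\min(\dom(\psi))$ on $\{w>0\}$ and $f^a_{opt}=\max(\dom(\psi))$ on $\{w<0\}$, finiteness of the minimum forces $w\ge0$ a.e.\ in $\O$ when $\sup(\dom(\psi))=+\infty$ and $w\le0$ a.e.\ in $\O$ when $\inf(\dom(\psi))=-\infty$, and the singular part must be supported in $\{w=0\}$; this is the $\lambda=0$ alternative.

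For the sufficiency part I would use that, when $j(x,\cdot)$ is convex, $f\mapsto J(f)=\int_\O j(x,\RR f)\,dx$ is convex (a convex integrand composed with the linear map $\RR$), so $J(f)\ge J(f_{opt})+\int_\O w\,d(f-f_{opt})$; the listed conditions make $f_{opt}$ a minimizer of the functional displayed above (the pointwise conditions being sufficient because $t\mapsto w(x)t+\lambda\psi(t)$ is convex), whence $\int_\O w\,d(f-f_{opt})\ge\lambda\big(\int_\O\psi(f_{opt})-\int_\O\psi(f)\big)\ge0$ for admissible $f$, using $\int_\O\psi(f_{opt})=m$ when $\lambda>0$. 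I expect the main obstacle to be not the convex-duality machinery, which is routine once the reduction is in place, but the justification of that reduction in the measure-valued setting — the differentiability of $J$ along the segments $f_t$ and the fact that $w$ is a genuine continuous function, which is precisely what the exponent restrictions $p<d/(d-2)$, $q>d/2$, $\sigma<2/(d-2)$ are designed to guarantee — together with the careful bookkeeping of the recession terms in \eqref{psim} needed to extract the pointwise and support conditions on $f^a_{opt}$ and $f^s_{opt,\pm}$ from the abstract minimality of the Lagrangian.
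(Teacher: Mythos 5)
This survey does not reproduce a proof of Theorem \ref{Thcop2} (it is deferred to \cite{BCM26}), so your argument can only be checked for internal soundness; on that score it follows exactly the route one expects and that the companion paper takes: first-order variation of $J$ along segments $f_t=(1-t)f_{opt}+tf$, self-adjointness of $\RR$ to rewrite the derivative as $\int_\O w\,d(f-f_{opt})$ with $w=\RR(\partial_s j(\cdot,\RR f_{opt}))$ (the $\partial_z$ term in the statement being a typo, as you note), a scalar Lagrange multiplier for the resulting linear-over-convex problem, and a pointwise reading of the Lagrangian through the measure functional \eqref{psim}, separating density and singular parts. Your exponent bookkeeping ($p<d/(d-2)$, $q>d/2$, $\sigma<2/(d-2)$) is the right one both for the dominated-convergence step and for making $w$ a bounded continuous function, and the sufficiency argument under convexity of $j(x,\cdot)$ is correct. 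Two caveats worth fixing if this were written out in full: (i) the multiplier step needs a Slater-type qualification ($\exists f$ admissible with $\int_\O\psi(f)<m$), which the theorem as stated does not assume; the degenerate case $m=\min\{\int_\O\psi(f)\}$ must be treated separately (it is easy, e.g.\ by taking $\lambda$ large, but it is not covered by the duality argument as you set it up). (ii) The claim $w\in W^{2,p_0}(\O)\hookrightarrow C(\overline\O)$ requires boundary regularity of $\O$, which is not assumed; what you actually need is only that $w$ is bounded and continuous inside $\O$ (De Giorgi--Nash--Moser for $-\Delta w\in L^{p_0}$, $p_0>d/2$), which suffices since the singular measures are supported in $\O$, together with a measurable-selection argument when passing from global minimality of the Lagrangian to the a.e.\ pointwise minimization of $t\mapsto w(x)t+\lambda\psi(t)$.
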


\begin{exam}
We take
$$j(u)=-\frac{u^2}{2},\qquad\psi(s)=\frac{s^2}{2}.$$
The optimization problem is then
\be\label{eigpbf}
\max\bigg\{\frac{1}{2} \int_\O u_f^2\,dx\ :\ \int_\O f^2dx\leq 2m\bigg\}.
\ee
From Theorem \ref{Thcop} the necessary conditions of optimality give, for a suitable $\lambda>0$ and $w=-\RR\big(\RR(f)\big)$
$$w=-\lambda f\ \ \text{in }\O,\quad \int_\O f^2dx=2m.$$
Thus, $f$ solves the following eigenvalue problem for a fourth order PDE
$$\begin{cases}
\Delta^2f=f/\lambda&\text{in }\O\\
f=\Delta f=0&\text{on }\partial\O,
\end{cases}$$
or equivalently it is a solution of the eigenvalue problem for the Laplace operator
$$\begin{cases}
-\Delta f=f/\sqrt{\lambda}&\text{in }\O\\
f=0&\text{on }\partial\O.
\end{cases}$$
Since this implies $u_f=\sqrt{\lambda}f,$ we also have
$$\int_\O u_f^2dx=\lambda\int_\O f^2dx=2\lambda m.$$
Thus, $1/\sqrt{\lambda}$ agrees with the smaller eigenvalue of the Laplacian operator with homogeneous Dirichlet conditions. Denoting by $\mu_1$ such eigenvalue and taking $\phi$ as the unique positive eigenvector with unit $L^2$ norm, we have then proved that
$$\lambda=1/\mu_1^2,\quad f=\pm \sqrt{2m}\phi.$$

For $m=1/2$ and $\Omega=\big\{(x_1,x_2)\in\mathbb{R}^2: x_1^2+4x_2^2\le 4\big\}$ we show in Figure \ref{Eigv} the computed optimal source for problem \eqref{eigpbf} with the corresponding $\lambda=0.0785912$.

\begin{figure}[!t]
\includegraphics[width=12.5cm]{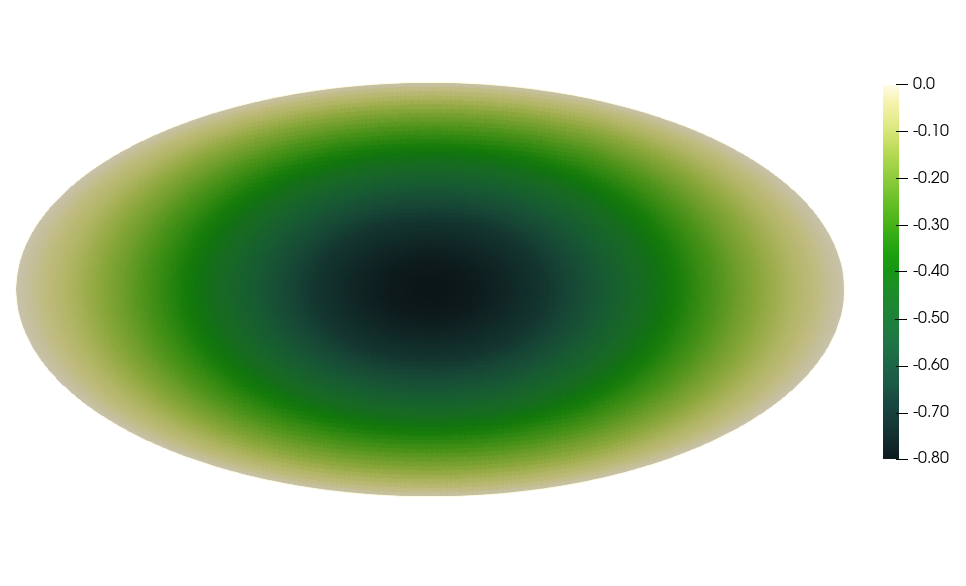}
\caption{Optimal source.} \label{Eigv}
\end{figure}
\end{exam}

Concerning the regularity of optimal sources $f_{opt}$, a general result is not available; we refer to \cite{BCM24} for a discussion on the matter. Here we limit ourselves to consider the particular case of the compliance functional with constraints on the source:
\be\label{complper}
\min\bigg\{\int_\O f\RR(f)\,dx\ :\ \int_\O f\,dx\ge m,\ f(x)\in[\alpha,\beta]\bigg\},
\ee
with $0\le\alpha<\beta$ and $\alpha|\O|<m<\beta|\O|$. By applying Theorem \ref{Thcop} we obtain that the optimal solution $f_{opt}$ is of bang-bang type, that is
\be\label{1E}
f_{opt}=\alpha1_E+\beta1_{\O\setminus E}\qquad\hbox{with }E=\{\RR(f_{opt})<s\},
\ee
for some positive constant $s$ that has to be chosen such that the integral constraint $\int_\O f\,dx\ge m$ is saturated. The function $u=\RR(f_{opt})$ thus solves te PDE
$$\begin{cases}
-\Delta u=\beta1_{\{u<s\}}+\alpha1_{\{u>s\}}&\text{in }\O\\
u=0&\text{on }\partial\O.
\end{cases}$$
We can now apply Theorem 3.5 of \cite{BCM24} and obtain the following result.

\begin{theo}\label{perimeter}
The optimal solution $f_{opt}$ of the minimization problem \eqref{complper} is in $BV(\O)$, hence the optimal set $E$ in \eqref{1E} above has a finite perimeter.
\end{theo}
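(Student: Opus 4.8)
The plan is to reduce the statement to a local bound on the difference quotients of $f_{opt}$ and then to extract that bound from the state equation, exploiting the monotonicity built into the bang--bang relation; this is precisely the mechanism behind Theorem~3.5 of \cite{BCM24}, which I sketch here for the present setting. First, by \eqref{1E} the optimal source $f_{opt}$ takes only the two values $\alpha$ and $\beta$, so it is an affine function of $\mathbf 1_E$ and $f_{opt}\in BV(\O)$ is equivalent to $\per(E;\O)<\infty$, with $\per(E;\O)=(\beta-\alpha)^{-1}|Df_{opt}|(\O)$; it therefore suffices to prove $f_{opt}\in BV(\O)$. Writing $u:=\RR(f_{opt})$, the pair $(u,s)$ --- with $s>0$ the Lagrange multiplier, positive since $\alpha|\O|<m$ --- satisfies $-\Delta u=\beta\mathbf 1_{\{u<s\}}+\alpha\mathbf 1_{\{u>s\}}$ in $\O$, $u\in H^1_0(\O)$. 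Since $f_{opt}\in L^\infty(\O)$, elliptic regularity gives $u\in W^{2,p}_{loc}(\O)$ for every $p<\infty$, and $u\in C(\overline\O)$ with $u=0$ on $\partial\O$; hence $u<s$ in a neighbourhood of $\partial\O$ and $K:=\{u=s\}$ is a compact subset of $\O$. I would then rewrite the equation in monotone form, $-\Delta u+G(u)=0$ a.e.\ in $\O$, with $G(t):=-\beta$ for $t<s$ and $G(t):=-\alpha$ for $t\ge s$: then $f_{opt}=-G(u)$ a.e.\ and $G$ is \emph{non-decreasing} because $\beta>\alpha$ (on $K$ one has $\Delta u=0$ a.e., so either $\alpha>0$ and $|K|=0$, or $\alpha=0$ and $f_{opt}=0$ on $K$, both consistent).

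The core step is a uniform local estimate on the difference quotients of $f_{opt}$. Fix $\omega\Subset\O$ and $\zeta\in C^\infty_c(\O)$ with $\zeta\equiv1$ on $\omega$, and for $0<|h|<\dist(\spt\zeta,\partial\O)$ set $D_hv(x)=v(x+h)-v(x)$. Applying $D_h$ to the equation gives $-\Delta(D_hu)=-D_h(G(u))$ on $\spt\zeta$; I would test this against $\eta_\delta(D_hu)\,\zeta^2\in H^1_0(\O)$, where $\eta_\delta$ is a smooth odd non-decreasing approximation of $\operatorname{sign}$, obtaining
$$\int_\O\eta'_\delta(D_hu)|\nabla D_hu|^2\zeta^2\,dx+\int_\O 2\zeta\,\eta_\delta(D_hu)\,\nabla\zeta\cdot\nabla D_hu\,dx=-\int_\O D_h(G(u))\,\eta_\delta(D_hu)\,\zeta^2\,dx.$$
The first left-hand term is $\ge0$; the second is bounded, uniformly in $\delta$, by $C\|\nabla D_hu\|_{L^2(\spt\zeta)}\le C|h|\,\|\nabla^2u\|_{L^2(\omega')}$ for a slightly larger $\omega'\Subset\O$ (using $u\in H^2_{loc}$). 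Thus the right-hand side is $\ge-C|h|$ for every $\delta$; letting $\delta\to0$ and using that $G$ is non-decreasing --- so $D_h(G(u))$ has the sign of $D_hu$ and vanishes where $D_hu$ does, whence $D_h(G(u))\eta_\delta(D_hu)\to|D_h(G(u))|$, dominated by $\beta-\alpha$ on $\spt\zeta$ --- the right-hand side tends to $-\int_\O|D_h(G(u))|\zeta^2\,dx$. Hence $\int_\omega|D_hf_{opt}|\,dx\le C|h|$ for all small $h$, so $f_{opt}\in BV_{loc}(\O)$. Finally, since $f_{opt}$ is locally constant on the open set $\O\setminus K$ and $K$ is compact in $\O$, choosing open $\O'$ with $K\subset\O'\Subset\O$ yields $|Df_{opt}|(\O)=|Df_{opt}|(\O')<\infty$; therefore $f_{opt}\in BV(\O)$ and $E$ has finite perimeter.

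The main obstacle is entirely in the choice of test function: one must difference the state $u$ (not $G(u)$, which is merely bounded), must localise with $\zeta$ because $D_hu$ does \emph{not} vanish on $\partial\O$ and the corresponding boundary term is not infinitesimal, and must pair with a regularised sign of $D_hu$ (not of $D_h(G(u))$), so that it is exactly the monotonicity of $G$ that converts the right-hand side into $-\int|D_h(G(u))|$. That the equation --- and not just the Lipschitz regularity of $u$ --- is essential is shown by Lipschitz functions such as $u(x)=s+(\dist(x,\partial\O))^2\sin(1/\dist(x,\partial\O))$ near $\partial\O$, whose sublevel set $\{u<s\}$ has infinite perimeter: an oscillatory degeneracy of the level set $\{u=s\}$ that the monotone structure $-\Delta u+G(u)=0$ excludes. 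The last passage, from $BV_{loc}(\O)$ to $BV(\O)$, uses only $u<s$ near $\partial\O$, which holds as soon as the Dirichlet problem on $\O$ admits a continuous solution (for rougher $\O$ one reads the conclusion as $E$ having finite perimeter in $\O$).
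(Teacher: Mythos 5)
Your argument is correct and follows essentially the same route as the paper: the paper reduces problem \eqref{complper} to the semilinear equation $-\Delta u=\beta 1_{\{u<s\}}+\alpha 1_{\{u>s\}}$ and then invokes Theorem 3.5 of \cite{BCM24}, whose content (BV regularity produced by the monotone zeroth-order term) is exactly what your difference-quotient/regularized-sign computation reproves in this special case, including the correct handling of the level set $\{u=s\}$. The only caveat is that your passage from $BV_{loc}(\O)$ to $BV(\O)$ uses $u\in C(\overline\O)$ and hence some mild boundary regularity of $\O$, which you explicitly flag and which is likewise implicit in the cited theorem.
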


When the domain $\O$ is convex, in some cases we can obtain a better regularity for the optimal right-hand side $f_{opt}$. Let us return to the compliance case \eqref{complper} with $\alpha=0$, and assume $\O$ convex. We have seen above that the optimal right-hand side $f_{opt}$ is in $BV(\O)$ and of bang-bang type: $f_{opt}=1_E$ with $E=\{w<s\}$ for a suitable $s$ such that $|E|=m$, where $w$ is the solution of the PDE
$$\begin{cases}
-\Delta w=1_{\{w<s\}}&\text{in }\O\\
w=0&\text{on }\partial\O.
\end{cases}$$
The result in \cite{BCM26} is the following.

\begin{theo}\label{Econvex}
The optimal set $E=\{w<s\}$ above is convex and of class $C^1$.
\end{theo}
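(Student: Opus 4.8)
The plan is to read the equation satisfied by $w$ as the Euler--Lagrange equation of an obstacle problem and then to prove the two assertions of the statement by different tools: free--boundary regularity for the $C^1$ part, and a concavity (quasi--concavity) principle for the convexity part. As a preliminary normalization I would first check that $0\le w\le s$ in $\O$. The lower bound follows from $-\Delta w=1_{\{w<s\}}\ge0$ together with $w=0$ on $\partial\O$ by the minimum principle. For the upper bound, note that the open set $\{w>s\}$ is compactly contained in $\O$ (since $w=0<s$ on $\partial\O$); there $w$ is harmonic with boundary value $s$ on $\partial\{w>s\}\subset\{w=s\}$, hence $w\equiv s$ by the maximum principle, contradicting $w>s$, so $\{w>s\}=\emptyset$. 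Setting $v:=s-w$ we then have $v\ge0$, $v=s$ on $\partial\O$ and $\Delta v=1_{\{v>0\}}$, i.e. $v$ solves the classical obstacle problem with zero obstacle and unit datum. In this language $\Gamma:=\partial\{w=s\}\cap\O$ is the free boundary, $w=s$ and $\nabla w=0$ hold on $\Gamma$, and the geometry of the optimal set $E$ is entirely governed by $\Gamma$.

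For the $C^1$ regularity I would invoke the regularity theory of this obstacle problem. Since $\O$ is of class $C^{1,1}$ and the right--hand side is the constant $1$, one has $v\in C^{1,1}_{\rm loc}(\O)$, and the nondegeneracy estimate $\sup_{B_r(x_0)}v\ge c\,r^2$ at free--boundary points $x_0$ (a direct consequence of the unit right--hand side) places us in the setting of Caffarelli's free--boundary theorem: near every point of positive Lebesgue density of the contact set, $\Gamma$ is an analytic, in particular $C^1$, hypersurface. The measure--theoretic input that the contact set has positive measure and that $\Gamma$ has locally finite perimeter is already supplied by Theorem~\ref{perimeter}. It then remains only to exclude possible singular free--boundary points, and this will follow a posteriori from the convexity proved in the next step, since the boundary of a convex body that is $C^1$ on a dense relatively open subset is $C^1$ everywhere.

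The heart of the matter, and the step I expect to be the main obstacle, is the convexity. The plan is to establish that $w$ is quasi--concave on $\O$, i.e. that its super--level sets $\{w\ge t\}$ are convex for every $t$; taking $t=s$ then shows that the region bounded by $\Gamma$ is convex, which is the convexity asserted for the optimal configuration $E$, and at the same time shows that $\Gamma$ is a convex hypersurface. For a general convex $\O$, not necessarily symmetric, moving planes alone only yield symmetry, so the natural instrument is a concavity maximum principle of Korevaar--Lewis type, or equivalently a constant--rank theorem in the spirit of Caffarelli--Friedman, applied to the concavity function of $w$ (or to a suitable concave transform of $w$ on the set $\{-\Delta w=1\}$). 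The delicate point, absent from the classical torsion problem, is precisely the free boundary: the concavity function must be controlled up to $\Gamma$ using the overdetermined Cauchy data $w=s$, $\nabla w=0$ there, and one must rule out both an interior degeneracy and a loss of rank along $\Gamma$. As a fallback I would use the continuity method, deforming $\O$ to a ball through convex domains, obtaining convexity for the ball by moving planes, and propagating it along the deformation by means of the $C^1$ regularity of $\Gamma$ together with a strict--convexity (constant--rank) argument that prevents the second fundamental form of the level sets from vanishing. Controlling that second fundamental form as the level sets approach the free boundary is exactly where the analysis is most delicate, and is the real content of the theorem.
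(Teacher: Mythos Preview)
The paper does not prove this theorem; it merely states the result and refers to \cite{BCM26}, so there is no proof in the present paper against which to compare your proposal.

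On your proposal itself: the reduction to the obstacle problem for $v=s-w$ is correct and is the natural starting point, and your strategy for the $C^1$ regularity (Caffarelli's free--boundary theory together with the density criterion, fed by the convexity of the coincidence set) is sound. There is, however, a real mismatch between what you argue and what is literally asserted. Quasi--concavity of $w$ yields convexity of the super--level set $\{w\ge s\}=\{w=s\}$, i.e.\ of the coincidence set $\Omega\setminus E$, \emph{not} of $E=\{w<s\}$. And $E$ as written cannot be convex once the coincidence set has nonempty interior (which it has, since $|E|=m<|\Omega|$): because $w=0<s$ on $\partial\Omega$, the set $E$ is an annular region trapped between $\partial\Omega$ and the free boundary, and any line through an interior point of $\{w=s\}$ meets $E$ on both sides. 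What your argument would actually establish---and what is presumably the intended content of the result---is that the coincidence set $\{w=s\}$ is convex and that its boundary $\Gamma$ is a $C^1$ hypersurface. You should make this explicit rather than silently identifying ``the region bounded by $\Gamma$'' with ``the optimal configuration $E$''.

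On the convexity step, your outline (Korevaar--Lewis concavity function, constant--rank theorem, continuation from the ball) is a plan rather than a proof, and you correctly locate the delicate point at the free boundary, where the concavity function must be controlled through the overdetermined data $w=s$, $\nabla w=0$. For this particular obstacle problem---zero obstacle, unit right--hand side, constant Dirichlet data on a convex domain---the convexity of the coincidence set can be reached more directly than via the full continuation apparatus you describe; but since the actual argument lives in \cite{BCM26} and is not reproduced here, a line--by--line comparison is not possible.
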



\bigskip\bigskip

\noindent{\bf Acknowledgments.} The work of GB is part of the project 2017TEXA3H {\it``Gradient flows, Optimal Transport and Metric Measure Structures''} funded by the Italian Ministry of Research and University. GB is member of the Gruppo Nazionale per l'Analisi Matematica, la Probabilit\`a e le loro Applicazioni (GNAMPA) of the Istituto Nazionale di Alta Matematica (INdAM). The work of JCD and FM is a part of the FEDER project PID2023-149186NB-I00 of the {\it Ministerio de Ciencia, Innovaci\'on y Universidades} of the government of Spain.

\bigskip

\bigskip\noindent
Giuseppe Buttazzo:\\
Dipartimento di Matematica, Universit\`a di Pisa\\
Largo B. Pontecorvo 5, 56127 Pisa - ITALY\\
{\tt giuseppe.buttazzo@unipi.it}\\
{\tt http://www.dm.unipi.it/pages/buttazzo/}

\bigskip\noindent
Juan Casado-D\'{\i}az:\\
Dpto. de Ecuaciones Diferenciales y An\'alisis Num\'erico, Universidad de Sevilla\\
C. Tarfia s/n, 41012 Sevilla - SPAIN\\
{\tt jcasadod@us.es}\\
{\tt https://prisma.us.es/investigador/724}

\bigskip\noindent
Faustino Maestre:\\
Dpto. de Ecuaciones Diferenciales y An\'alisis Num\'erico, Universidad de Sevilla\\
C. Tarfia s/n, 41012 Sevilla - SPAIN\\
{\tt fmaestre@us.es}\\
{\tt https://prisma.us.es/investigador/2406}

\end{document}